\crefname{enumi}{PV}{parts}
\def\blfootnote{\xdef\@thefnmark{}\@footnotetext}
\newcommand{\z}{\mathbb{Z}}
\newcommand{\N}{\mathbb{N}}
\newcommand{\acts}{\curvearrowright}
\newcommand{\AH}{\mathcal{AH}}
\newcommand{\AHG}{\mathcal{AH}(G)}
\newcommand{\Hll}{\mathcal{H}}
\newcommand{\HlG}{\mathcal{H}(G)}
\newcommand{\Ga}{\Gamma}
\newcommand{\R}{\mathbb{R}}
\newcommand{\e}{\varepsilon}
\newcommand{\nl}{\mathrm{(NL)}}
\newcommand{\wrp}{\operatorname{wr}}
\newcommand{\C}{\mathbb{C}}
\newcommand{\op}{\operatorname}
\newcommand{\G}{\Gamma}
\newtheorem{theorem}{Theorem}[section]
\newtheorem{lemma}[theorem]{Lemma}
\newtheorem{prop}[theorem]{Proposition}
\newtheorem{cor}[theorem]{Corollary}
\newtheorem{ques}[theorem]{Question}
\theoremstyle{definition}
\newtheorem{definition}[theorem]{Definition}
\newtheorem{example}[theorem]{Example}
\theoremstyle{remark}
\newtheorem{remark}[theorem]{Remark}
\numberwithin{equation}{section}
\begin{document}
\title{A survey on classifying hyperbolic actions of groups}

\author{Sahana H. Balasubramanya}

\date{}

\maketitle

\begin{abstract} This paper is a survey of results proved in recent years that pertain to classifying cobounded hyperbolic actions of any group $G$. In other words, we discuss results that allow us to describe the partially ordered set $\HlG$, first introduced in by Abbott-Balasubramanya-Osin. In certain cases, a complete classification of the poset is possible. In cases where this seems out of reach, we provide descriptions of large subsets of the poset. This covers a wide range of groups, including acylindrically hyperbolic groups, nilpotent groups, many solvable groups and ``Thompson-like" groups. We also cover the range of strategies utilized to obtain these classification results. As a result, we produce some new examples of groups with interesting $\HlG$ structure. \\

Subject class:  20F65, 20F16.
\end{abstract}

\tableofcontents


\section{Introduction}
\label{sec:intro}

Isometric actions on hyperbolic metric spaces, termed hyperbolic actions, have proven to be important tools in geometric group theory. They have been used to study the coarse geometry of a wide variety of groups, bounded cohomology (\cite{bf}), quotients of groups (\cite{dgo}), and decision problems for groups (\cite{geometry}). It is thus natural to consider the problem of classifying \emph{all} of the hyperbolic actions of a group. Indeed, this is a topic that has received considerable attention in recent years (see \cite{bestvinasurvey} for a recent survey). The problem also has links to the  Bieri-Neumann-Strebel invariants (see \cite{ABR}) and may thus also be useful for studying related notions such as property $R_\infty$ (see \cite{SSV}). 

 Although natural, requesting a complete classification of hyperbolic actions for any group $G$ is a highly complex problem to tackle. For instance, any group can be made to act trivially on any hyperbolic space. One can also use a combinatorial horoball construction by Groves-Manning \cite{GrovesManning} to construct many parabolic actions of a countable group $G$ on a hyperbolic space.  These actions, however, retain almost no information about the algebraic structure of the group, and so it is feasible to impose conditions on the action that exclude these actions from consideration.   

It turns out that it suffices to study \emph{cobounded} hyperbolic actions to solve this problem. Additionally, it makes sense to consider actions up to \emph{quasi-isometric} equivalence. i.e. spaces which share the same coarse geometry, and where the actions are, up to some bounded error, effectively the same. This is especially helpful as hyperbolicity is preserved under quasi-isometries. Together, these conditions rule out all trivial actions except on a point and all parabolic actions. Additionally, they allow us to introduce a partial order on the actions, and arrange them into a poset, which we call the poset of \emph{hyperbolic structures on $G$}, denoted $\HlG$. Roughly speaking, we consider one action to be smaller than another when the smaller action may be obtained by an operation of collapsing an equivariant family of subspaces in the larger action (see section \ref{sec:prelims} for details). Intuitively speaking, the larger action retains more information about the algebraic structure of the group; the action on a point (called the trivial action/structure) is thus always the smallest in this poset. 

However, even with these conditions, many groups classically studied in geometric group theory admit uncountably many distinct hyperbolic actions (see section \ref{sec:ahgrps} below), making the classification problem intractable even for well-understood groups. Although we can still describe large sections of the poset $\HlG$ (which has been useful for studying the groups nonetheless), it urges us to to attempt to solve this problem for groups with naturally fewer hyperbolic actions. The author made the first step towards this goal in \cite{Qp}, which was followed by the papers \cite{AR, ABR, ABR2}. These papers completely classify the poset $\HlG$ of certain classically studied solvable groups, which prompted the authors to together look into the classification problem for more general solvable groups. This work is covered in \ref{sec:solvgrps}. The overarching research goal of the work contained therein is to develop a theory that describes $\HlG$ for arbitrary metabelian, and eventually, arbitrary solvable groups. 

In another related direction, one can ask which groups, besides finite groups, admit a trivial $\HlG$ poset. i.e. which groups $G$ only admit the trivial hyperbolic structure? Such groups are natural examples of groups that are inaccessible for study from the perspective of hyperbolic actions. Nonetheless, it is interesting to study the common elements in the behavior and structure of such groups, especially those that obstruct the group from acting on a hyperbolic space in a meaningful way. The author, along with Fournier-Genevois-Sisto formalized the study of such groups in \cite{PropNL}. Their explorations cover many Thompson-like groups (covered in section \ref{sec:thompson}). In addition, other (amenable) groups are also studied and the paper provides criteria to check when these can admit hyperbolic actions without the necessity of explicitly constructing one. 

More recently, the paper \cite{bcfs} explores this classification problem in the setting of lattices in semi-simple groups. In the case that the simple factors have higher rank, the lattice was known to have no non-trivial hyperbolic actions by work from \cite{haettel}. However, the question remained unexplored in the case when the simple factors have rank one. It is natural to expect that the action of the factors will have connections to the actions of the lattices, as we can always create actions of the lattice by factoring through the rank one factors. \cite{bcfs} shows that, under mild conditions, these completely classify the actions of the lattice; providing a new source of groups for which $\HlG$ is completely understood; this work is covered in \ref{sec:lattices}.

This present paper was born from a desire to collect all these results into a common body of work. We hope such an overview will make it easier to reference and connect the different pieces of the larger puzzle for those interested in an overview or understanding this area of research, especially since it incorporates tools from different areas of mathematics such as commutative algebra, bounded cohomology, valuations and Lie groups.


\section{Definitions and Preliminaries}\label{sec:prelims}

In this section, we collect the necessary background information needed to understand the results in this paper. We formalize many definitions and theorems that will be used throughout the paper. We start with the precise description of $\HlG$ and its poset relation.

\subsection{Comparing generating sets and group actions} 
 
Throughout this paper, all group actions on hyperbolic spaces are assumed to be isometric, unless otherwise stated. Given a metric space $X$, we denote by $d_X$ the distance function on $X$. The $X$ will frequently be dropped from $d_X$ if it is understood by context. Moreover, if $G$ is a group and $S$ is a generating set of $G$, then we denote by $\|\cdot\|_S$ the word length on $G$ associated to $S$ and by $d_S$ the corresponding word metric $d_S(g,h)=\|gh^{-1}\|_S$. 

\begin{definition}\cite[Definition 1.1]{ABO}
Let $S$, $T$ be two (possibly infinite) generating sets of a group $G$. We say that $S$ is \emph{dominated} by $T$, written $S\preceq T$, if $$ \sup_{t\in T}\|t\|_S<\infty.$$  The relation $\preceq$ is a preorder on the set of generating sets of $G$, and therefore it induces an equivalence relation in the standard way:
$$
S\sim T \;\; \Leftrightarrow \;\; S\preceq T \; {\rm and}\; T\preceq S.
$$
This is equivalent to the condition that the Cayley graphs $\Ga(G,S)$ and $\Ga(G, T)$ with respect to $S$ and $T$ are $G$--equivariantly quasi-isometric. We denote by $[S]$ the equivalence class of a generating set $S$. The preorder $\preceq$ extends to a partial order on the equivalence classes in the standard way:
$$
[S]\preccurlyeq [T] \;\; \Leftrightarrow \;\; S\preceq T.
$$
\end{definition}

For example, all finite generating sets of a finitely generated group are equivalent and the equivalence class containing any finite generating set is the largest element. For every group $G$, the smallest element is $[G]$. Note also that this order is ``inclusion reversing": if $S$ and $T$ are generating sets of $G$ such that $S\subseteq T$, then $T\preceq S$.

To define a hyperbolic structure on a group, we first recall the definition of a hyperbolic space, for which we use the Rips condition. 

\begin{definition} A metric space $X$ is called \emph{$\delta$--hyperbolic} if it is geodesic and for any geodesic triangle $\Delta $ in $X$, each side of $\Delta $ is contained in the union of the closed $\delta$--neighborhoods of the other two sides.
\end{definition} 

\begin{definition}\cite[Definition 1.2]{ABO}
A \emph{hyperbolic structure} on $G$ is an equivalence class $[S]$ such that the Cayley graph $\Gamma (G,S)$ with respect to $S$ is hyperbolic. Since hyperbolicity of geodesic metric spaces is quasi-isometry invariant, this definition is independent of the choice of the representative $S$. We denote the set of hyperbolic structures by $\HlG$. It is a poset with the same partial order induced from the one on the equivalence classes.  \end{definition}

An equivalent description of the poset $\HlG$ can be made in terms of cobounded hyperbolic actions of $G$ up to coarsely equivariant quasi-isometry, which we now summarize.

\begin{definition}
The action $G\curvearrowright X$ is \emph{cobounded} if for some (equivalently any) $x\in X$ there exists $R>0$ such that every point of $X$ is distance at most $ R$ from some point of the orbit $Gx$.
\end{definition}

Given  a cobounded hyperbolic action $G\curvearrowright X$, there is an associated hyperbolic structure given by the Schwarz-Milnor Lemma. 

\begin{lemma}\cite[Lemma 3.11]{ABO}\label{lem:MS}
Let $G\curvearrowright X$ be a cobounded hyperbolic action of $G$. Let $B\subseteq X$ be a bounded subset such that $\displaystyle \bigcup_{g\in G} gB=X$. Let $D=\operatorname{diam}(B)$ and let $x\in B$. Then $G$ is generated by the set \[S=\{g\in G: d_X(x,gx)\leq 2D+1\},\] and $X$ is $G$--equivariantly quasi-isometric to $\Gamma(G,S)$.
\end{lemma}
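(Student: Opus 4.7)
The plan is to follow the classical Schwarz-Milnor-style argument; notably, hyperbolicity of $X$ plays no role in the proof and only coboundedness is used. First I would distill the geometric content of the hypotheses: since $X = \bigcup_{g \in G} gB$ with $\operatorname{diam}(B) = D$ and $x \in B$, for every $y \in X$ there exists $g \in G$ with $d_X(y, gx) \leq D$ (choose $g$ so that $y \in gB$; then $y$ and $gx$ both lie in $gB$, which has diameter $D$). This is the sole way the cobounded data $B, D$ enters the argument.

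Next I would show that $S$ generates $G$ by a geodesic-chain argument that simultaneously delivers a length bound. Fix $h \in G$ and a geodesic $\gamma$ in $X$ from $x$ to $hx$. Subdivide $\gamma$ as $y_0 = x, y_1, \ldots, y_n = hx$ with $d_X(y_i, y_{i+1}) \leq 1$ and $n \leq \lceil d_X(x, hx) \rceil$. For each intermediate $i$ choose $g_i \in G$ with $d_X(y_i, g_i x) \leq D$, setting $g_0 = e$ and $g_n = h$ (which is legitimate since $y_0$ and $y_n$ are already orbit points). The triangle inequality then yields $d_X(g_i x, g_{i+1} x) \leq 2D + 1$, and since the action is isometric, $s_i := g_i^{-1} g_{i+1}$ satisfies $d_X(x, s_i x) \leq 2D + 1$ and hence $s_i \in S$. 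A telescoping product gives $h = s_0 s_1 \cdots s_{n-1}$, proving both $\langle S \rangle = G$ and the length estimate $\|h\|_S \leq d_X(x, hx) + 1$.

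Finally I would show that the orbit map $\phi \colon \Gamma(G, S) \to X$, $g \mapsto gx$, is a $G$-equivariant quasi-isometry. Equivariance is built in, and coarse surjectivity within $D$ is exactly the observation from the first paragraph. The upper bound $d_X(gx, hx) \leq (2D+1)\, d_S(g, h)$ follows because every generator in $S$ moves $x$ by at most $2D+1$ and the action is isometric; the lower bound $d_S(g, h) \leq d_X(gx, hx) + 1$ is precisely the length estimate from the previous paragraph applied to $g^{-1} h$. The only mildly delicate point in executing this plan is fixing the chain endpoints to be $e$ and $h$, but this is immediate from $y_0 = e \cdot x$ and $y_n = h \cdot x$, so no serious obstacle arises.
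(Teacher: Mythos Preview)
The paper does not supply its own proof of this lemma; it is stated with a citation to \cite{ABO} and used as a black box. Your argument is the standard Schwarz--Milnor proof and is correct in every detail, including the observation that hyperbolicity plays no role (only that $X$ is geodesic is needed for the chain argument).
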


 Thus, up to  equivariant quasi-isometries, hyperbolic actions of $G$ correspond to actions of $G$ on its hyperbolic Cayley graphs. There is an equivalence relation on hyperbolic actions that respects the poset structure, we direct the interested reader to \cite[Proposition 3.12]{ABO} for details. Thus, to classify the cobounded hyperbolic actions of $G$ is tantamount to classifying the elements of $\HlG$.

We will sometimes refer to representatives of hyperbolic structures, by which we mean the following : if $[S]\in \Hll(G)$, we say that $[S]$ is represented by a cobounded hyperbolic action $G\curvearrowright X$ if $G\curvearrowright X$ is equivalent to the action $G\curvearrowright \Gamma(G,S)$; that is, if there is a coarsely $G$-equivariant quasi-isometry $X\to \Gamma(G,S)$.

\subsection{Actions on hyperbolic spaces}

Let $G$ be a group acting on a hyperbolic space $X$, and denote the Gromov boundary of $X$ by $\partial X$. In general, $X$ is not assumed to be proper, and its boundary is defined as the set of equivalence classes of sequences convergent at infinity. We also denote by $\Lambda (G)$ the set of limit points of $G$ on $\partial X$. That is, $\Lambda (G) :=\partial X\cap \overline{Gx},$ where $\overline{Gx}$ denotes the closure of a $G$--orbit in $X\cup \partial X$, for any choice of basepoint $x\in X$.  This definition is independent of the choice of $x\in X$. 

Recall that a loxodromic element $g$ in a hyperbolic action on $X$ is an infinite order element such that the map $n \to g^nx$ is a quasi-isometric embedding for some (equivalently any) $x \in X$. Every loxodromic element $g \in G$ has exactly 2 limit points $g^{\pm \infty}$ on the Gromov
boundary $\partial X$. Loxodromic elements $g, h \in G$ are called independent if the sets $\{ g^{\pm\infty}\}$ and $\{ h^{\pm \infty}\}$ are disjoint. 

The following theorem summarizes the classification of group actions on hyperbolic spaces due to Gromov \cite[Section 8.2]{Gromov} and the some related results from \cite[Propositions 3.1 and 3.2]{Amen}.

\begin{theorem}\label{ClassHypAct}
Let $G$ be a group acting on a hyperbolic space $X$. Then exactly one of the following conditions holds.
\begin{enumerate}
\item[1)] The action of $G$ is \emph{elliptic}. i.e.$|\Lambda (G)|=0$. Equivalently,  $G$ has bounded orbits. 

\item[2)] The action of $G$ is parabolic. i.e.$|\Lambda (G)|=1$. A parabolic action cannot be cobounded. 

\item[3)] The action of $G$ is lineal. i.e.$|\Lambda (G)|=2$. Equivalently, $G$ contains a loxodromic element and any two loxodromic elements have the same limit points on $\partial X$. A lineal action $G \acts X$ is said to be \emph{orientable} if $G$ fixes its limit points on $\partial X$ \emph{pointwise}, and \emph{non-orientable} otherwise. 

\item[4)] The action of $G$ is non-elementary. i.e.$|\Lambda (G)|=\infty$. Then $G$ always contains loxodromic elements. In turn, this case breaks into two subcases.
\begin{enumerate}
\item[(a)] The action is \emph{quasi-parabolic}. i.e.$G$ fixes a point of $\partial X$. Equivalently, any two loxodromic elements of $G$ have a common limit point on the boundary. 
\item[(b)] The action of $G$ is of \emph{general type}. i.e. $G$ does not fix any point of $\partial X$. 
\end{enumerate}
\end{enumerate}
\end{theorem}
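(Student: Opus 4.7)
The plan is to analyze the closed $G$--invariant subset $\Lambda(G)\subseteq \partial X$, whose cardinality determines the type of the action. The classification reduces to showing that $|\Lambda(G)|\in\{0,1,2\}$ or $|\Lambda(G)|=\infty$, together with the announced equivalent characterizations in each case.

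For $|\Lambda(G)|=0$, I would argue directly from the boundary's definition via sequences convergent at infinity: if no sequence $(g_n x)$ accumulates on $\partial X$, then the orbit $Gx$ must be bounded, using the fact that in a hyperbolic space an unbounded sequence with bounded Gromov products subconverges to a boundary point. Conversely, bounded orbits have empty limit set, yielding case (1). For $|\Lambda(G)|=1$, the unique limit point $p$ is $G$--fixed since $\Lambda(G)$ is $G$--invariant. To rule out coboundedness, I would pick a geodesic ray $\gamma$ from a basepoint $x$ toward $p$: since every orbit sequence converging to $p$ has Gromov product with $\gamma(t)$ tending to infinity as $t\to\infty$, the distance from $\gamma(t)$ to the orbit $Gx$ cannot be uniformly bounded, contradicting coboundedness.

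For the lineal case $|\Lambda(G)|=2$, the pair $\{p,q\}$ is $G$--invariant (possibly swapped), giving a subgroup of index at most two that fixes both points pointwise. A bi-infinite quasi-geodesic $\ell$ joining $p$ to $q$ is then coarsely preserved, and every element of this subgroup acts on $\ell$ by coarse translation. Since all orbits are unbounded, some element must translate with positive translation length and is therefore loxodromic. If two loxodromic elements had different pairs of fixed points, their four limit points would force $|\Lambda(G)|\ge 3$, contradicting the case assumption; this yields the equivalence stated in (3). Orientability simply reflects whether the $G$--action on $\{p,q\}$ is trivial or swaps the two points.

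The main and most delicate step is the non-elementary case $|\Lambda(G)|\ge 3$, where I need to produce independent loxodromics and show that $|\Lambda(G)|$ is automatically infinite. The strategy is a ping-pong argument: given three distinct limit points and approximating orbit sequences, I would find group elements $g$ and $h$ whose dynamics on $\partial X$ have disjoint attracting and repelling neighborhoods, and then classical ping-pong yields free semigroups of loxodromics with pairwise distinct limit-point pairs. This both forces $|\Lambda(G)|=\infty$ and supplies the loxodromic elements asserted in (4). The quasi-parabolic vs.\ general type dichotomy then reduces to whether $G$ fixes a point of $\partial X$: if it does, a further ping-pong argument forces every loxodromic's pair of fixed points to include that global fixed point (otherwise one could move it off itself), giving the common limit point description; otherwise $G$ has no global boundary fixed point and the action is of general type. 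The main obstacle is executing the ping-pong setup carefully on a possibly non-proper hyperbolic space, where the standard dynamical tools on $\partial X$ (north--south dynamics, convergence to endpoints) require additional care since one cannot appeal to local compactness.
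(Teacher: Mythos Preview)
The paper does not give its own proof of this theorem: it is stated as a summary of results and attributed to Gromov \cite[Section~8.2]{Gromov} and to \cite[Propositions~3.1 and~3.2]{Amen}, with no argument supplied. So there is nothing in the paper to compare your proposal against beyond those citations.

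That said, your outline follows the classical route taken in those references, and is broadly correct in shape. One genuine gap worth flagging is your argument for case~(2). The claim that ``every orbit sequence converging to $p$ has Gromov product with $\gamma(t)$ tending to infinity'' does \emph{not} prevent the orbit from staying within bounded distance of the ray~$\gamma$; indeed, the orbit could simply march along $\gamma$ toward $p$. The standard way to rule out coboundedness is instead to observe that a cobounded action forces $\Lambda(G)=\partial X$, so $|\partial X|=1$; one then argues that an unbounded geodesic hyperbolic space with a single boundary point admits no cobounded isometric action (such a space is quasi-isometric to a ray, which has no nontrivial isometries, or equivalently one appeals to homogeneity of Cayley graphs via Schwarz--Milnor). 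A second, smaller point: in case~(3), the passage from ``unbounded orbit on a quasi-line'' to ``some element is loxodromic'' is exactly where the Busemann pseudocharacter (Section~\ref{sec:busemann}) does the work, and deserves to be named rather than asserted.
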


It follows immediately from the above that for any group $G$, $$\Hll(G)=\Hll_e(G)\sqcup \Hll_{\ell} (G)\sqcup \Hll_{qp} (G)\sqcup \Hll_{gt}(G)$$
where the sets of elliptic, lineal, quasi-parabolic, and general type hyperbolic structures on $G$ are denoted by $\Hll_e(G)$, $\Hll_{\ell} (G)$, $\Hll_{qp} (G)$, and $\Hll_{gt}(G)$ respectively. Namely, $[S]\in \mathcal H(G)$ lies in $\mathcal H_\ell(G)$ if the action $G\curvearrowright \Gamma(G,S)$ is lineal, and similar definitions hold for the posets $\Hll_e,\Hll_{qp},$ and $\Hll_{gt}$. Note that $\Hll_e(G) = \{ [G]\}$ always and this is always the smallest structure in the poset. We direct the reader to \cite[Section 4]{ABO} and \cite[Theorem 4.6]{ABO} for further explanation. 

\subsection{The Busemann pseudocharacter.} 
\label{sec:busemann}
A function $q\colon G\to \mathbb R$ is a \emph{quasi-character} (or \emph{quasi-morphism}) if there exists a constant $D$ such that $$|q(gh)-q(g)-q(h)|\le D$$ for all $g,h\in G$. We say that $q$ has \emph{defect at most $D$}. If, in addition, the restriction of $q$ to every cyclic subgroup of $G$ is a homomorphism, then $q$ is called a \emph{pseudocharacter} (or \emph{homogeneous quasi-morphism}). 

Given any action of a group $G$ on a hyperbolic space $X$ fixing a point on $\partial X$, one can associate a natural pseudocharacter $\beta$ called the \emph{Busemann pseudocharacter}. If $\beta$ is a homomorphism, then the action $G\curvearrowright X$ is called \emph{regular}. As we do not require the exact definition of $\beta$, we refer the reader to \cite[Sec. 7.5.D]{Gromov} and \cite[Sec. 4.1]{Man} for details. An element $g\in G$ is loxodromic with respect to the action of $G$ on $X$ if and only if $\beta(g)\ne 0$.  In particular, $\beta$ is not identically zero whenever $G \curvearrowright X$ is quasi-parabolic or orientable lineal. In the reverse direction, given a pseudocharacter on a group $G$, one can always construct an orientable lineal action, as evidenced by 

\begin{lemma}[{\cite[Lemma 4.15]{ABO}}]\label{lem:constolineal} Let $p\colon G \to \R$ be a non-zero pseudocharacter. Let $C$ be any constant such that the defect of $p$ is at most $C/2$ and there exists a value of $p$ in the interval $(0,C/2)$. Let $$ X =X_{p,C} =\{g \in G: |p(g)| < C\}.$$ Then $X$ generates $G$ and the map $p \colon (G,d_X) \to \R$ is a quasi-isometry. In particular, $[X]$ is orientable lineal. \end{lemma}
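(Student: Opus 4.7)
The strategy is to use the hypothesized element $a \in G$ with $p(a) \in (0, C/2)$ as a ``unit translation'' along the pseudocharacter, treating the defect $C/2$ as the discretization tolerance. Set $\alpha := p(a)$.

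First, I will show $X$ generates $G$ and control word length from above by $|p(\cdot)|$. Given $g\in G$, choose $n = \lfloor p(g)/\alpha\rfloor$, so $|p(g) - n\alpha| < \alpha$. Because $p$ is a homomorphism on $\langle a \rangle$, $p(a^{-n}) = -n\alpha$; applying the defect bound to the factorization $g = (ga^{-n})\cdot a^n$ gives
\[
|p(ga^{-n})| \;\le\; |p(g) - n\alpha| + C/2 \;<\; \alpha + C/2 \;<\; C,
\]
so $ga^{-n} \in X$. Since $a \in X$ as well, $g$ factors as at most $|n|+1 \le |p(g)|/\alpha + 2$ elements of $X$. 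This shows $X$ generates $G$ and supplies the lower quasi-isometry bound $\|g\|_X \le |p(g)|/\alpha + 2$.

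Next, I will prove the converse Lipschitz bound. If $g = x_1 \cdots x_k$ with $x_i \in X$ and $k = \|g\|_X$, iterating the defect inequality yields $|p(g) - \sum_i p(x_i)| \le (k-1)C/2$, and each $|p(x_i)| < C$, so $|p(g)| \le (3C/2)\|g\|_X$. Applying this to $h^{-1}g$ together with the defect bound on $h\cdot h^{-1}g$ (which gives $|p(g) - p(h)| \le |p(h^{-1}g)| + C/2$), one obtains $|p(g) - p(h)| \le (3C/2)\,d_X(g,h) + C/2$. Combined with the estimate from the previous paragraph applied to $h^{-1}g$, this shows $p\colon (G, d_X) \to \R$ is a quasi-isometric embedding. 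Coarse density follows at once from the fact that $\{p(a^n) = n\alpha : n \in \mathbb Z\}$ is $\alpha$-dense in $\R$; hence $p$ is a quasi-isometry.

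Finally, $\Gamma(G, X)$ is quasi-isometric to $\R$ and therefore hyperbolic, with Gromov boundary $\{\pm\infty\}$, so $[X] \in \HlG$ and the action is lineal. The quasi-isometry conjugates left multiplication by $g$ on $\Gamma(G, X)$ into a self-map of $\R$ within bounded distance of translation by $p(g)$; hence every $g \in G$ fixes both ends of $\R$ individually, $G$ fixes $\partial\Gamma(G,X)$ pointwise, and the action is orientable. Loxodromic elements exist since $p \not\equiv 0$ forces some $g$ with $p(g) \ne 0$, which is then a quasi-isometric embedding of $\langle g\rangle$ by the QI just established. The main (mild) obstacle is bookkeeping: the defect $C/2$ accumulates across every application of the pseudocharacter inequality, so constants must be tracked carefully to prevent the additive error from being multiplied by a quantity depending on $g$; the choice of $a$ with $\alpha < C/2$ is precisely what gives uniform QI constants on both sides.
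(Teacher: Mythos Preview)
The paper does not supply a proof of this lemma; it is quoted verbatim from \cite[Lemma~4.15]{ABO} and used as a black box, so there is no argument in the paper to compare against.

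Your proof is correct and is essentially the standard argument (and the one in \cite{ABO}). A few minor points of bookkeeping worth tightening if you write it out in full: when you bound $|n|$ you should note $|n|\le |p(g)|/\alpha+1$ (not $|p(g)|/\alpha$), which you do implicitly by writing $|n|+1\le |p(g)|/\alpha+2$; and for the QI lower bound on $d_X$ you should also pass through $|p(h^{-1}g)|\le |p(g)-p(h)|+C/2$ to get $d_X(g,h)\le \alpha^{-1}|p(g)-p(h)|+\alpha^{-1}C/2+2$, parallel to what you did for the upper bound. The orientability argument is fine: since $|p(gh)-p(h)-p(g)|\le C/2$, left multiplication by $g$ is within bounded distance (after applying $p$) of translation by $p(g)$, hence preserves each end. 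None of this affects the validity of the proposal.
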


\subsection{Acylindrical actions}

There are many groups arising from geometric and topological settings that admit natural actions on hyperbolic spaces; yet are not hyperbolic or relatively hyperbolic groups. Nonetheless, they do share many interesting properties with hyperbolic groups and this initiated the study of non-geometric actions, including WPD actions. In a continued vein, acylindrical actions were first defined by Sela for groups acting on trees \cite{Sela} and later generalized by Bowditch for general metric spaces \cite{Bowditchacyl}. 

\begin{definition} An action of a group $G$ on a metric space $X$ is called acylindrical if for every $\e >0$, there exist constants $R, N \geq 0$ (depending only on $\e$) such that for any $x, y \in X$ satisfying $d(x,y) \geq R$, we have that $$ \# \{ g \in G \mid d(x, gx) \leq \e \text{ and } d(y, gy) \leq \e \} \leq N.$$ 
\end{definition}

Acylindricity can be thought of as a kind of properness of the action on $X \times X$ minus a ``thick diagonal". However, acylindricity is not a true generalization of the notion of a proper action. For instance, one can use the combinatorial horoball construction \cite{GrovesManning} to construct proper actions that are not acylindrical.  

In contrast to general hyperbolic actions, acylindrical actions are much more restrictive, as evidenced by the following theorem of Osin \cite[Theorem 1.1]{Osin}. 

\begin{theorem}\label{thm:acylclass} Let $G$ be a group acting acylindrically on a hyperbolic space. Then $G$ satisfies exactly one of the following three conditions.
\begin{enumerate}
    \item[(a)] $G$ has bounded orbits.
    \item[(b)] $G$ is virtually cyclic and contains a loxodromic element.
    \item[(c)] $G$ contains infinitely many independent loxodromic elements.
\end{enumerate}
\end{theorem}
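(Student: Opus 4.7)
The plan is to start from the classification in Theorem~\ref{ClassHypAct}, which partitions every hyperbolic action into five types (elliptic, parabolic, lineal, quasi-parabolic, general type), and then to exploit acylindricity to (i) rule out the parabolic and quasi-parabolic types completely, (ii) force a lineal action with a loxodromic to come from a virtually cyclic group, and (iii) show that a general type action automatically contains infinitely many pairwise independent loxodromic elements. These three surviving types correspond to conclusions (a), (b), and (c) respectively.

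First I would eliminate the two cases with a unique boundary fixed point. In the parabolic case, pick a basepoint $x$ and choose $y$ very deep in a horoball at the parabolic fixed point $\xi$; because all orbits accumulate at $\xi$ without any loxodromic stretching, a direct count yields arbitrarily many distinct elements whose displacement is small at both $x$ and $y$, violating the acylindricity inequality for any fixed $(R,N)$. In the quasi-parabolic case, take a loxodromic $h$ with $h^{+\infty}=\xi$ and any $g$ whose Busemann pseudocharacter at $\xi$ vanishes (for instance, an elliptic element in the stabilizer of $\xi$). The conjugates $h^n g h^{-n}$ are pushed toward $\xi$ and therefore have uniformly small displacement along a long segment of the axis of $h$, again contradicting acylindricity for sufficiently long segments.

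Next I would analyze the lineal case. Fix a loxodromic $h$ with quasi-axis $\gamma$; the action of $G$ on the two limit points $\{h^{-\infty},h^{+\infty}\}$ gives a homomorphism $G\to\z/2\z$ whose kernel $G_0$ has index at most two. For each element of $G_0$, a signed translation length along $\gamma$ is well defined and assembles into a homomorphism $\tau\colon G_0\to\R$. Acylindricity forces the image of $\tau$ to be discrete (otherwise arbitrarily small translations produce too many elements moving the endpoints of a long subsegment of $\gamma$ only slightly) and also forces the kernel of $\tau$, which consists of elements almost fixing $\gamma$ pointwise, to be finite. Hence $G_0$, and therefore $G$, is virtually cyclic, as in conclusion (b).

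Finally, in the general type case, $G$ is known to contain at least one independent pair of loxodromics $g,h$. Conjugating yields a sequence of loxodromic elements $h^n g h^{-n}$ whose fixed point pairs are $h^n\cdot\{g^{\pm\infty}\}$. These pairs drift toward $\{h^{\pm\infty}\}$ as $|n|\to\infty$, so an elementary pigeonhole on the boundary dynamics selects an infinite subfamily whose fixed point sets are pairwise disjoint, giving (c). The main obstacle throughout will be the parabolic and quasi-parabolic eliminations: each requires choosing the pair $(x,y)$ in the acylindricity definition with enough quantitative care that the $(R,N)$ bound is decisively violated, and in the quasi-parabolic case this hinges on understanding precisely how conjugation by $h$ transports small displacements along the axis.
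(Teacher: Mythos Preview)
The paper does not contain a proof of this theorem. This is a survey article, and Theorem~\ref{thm:acylclass} is simply quoted from \cite[Theorem~1.1]{Osin} with the attribution ``the following theorem of Osin''; no argument is given. There is therefore nothing in the present paper to compare your proposal against.

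As a standalone outline your plan has the right architecture---start from Theorem~\ref{ClassHypAct}, eliminate the parabolic and quasi-parabolic types, and analyze the lineal and general type cases---and this is indeed how Osin proceeds. Your sketches for the lineal and general type cases are reasonable. The two eliminations, however, are where the genuine work lies, and your arguments there are not yet convincing. In the parabolic case, fixing $x$ and pushing $y$ toward $\xi$ does not by itself produce many elements with small displacement at \emph{both} points; Osin instead proves directly that an acylindrical action without loxodromics has bounded orbits, via a quantitative argument on asymptotic translation lengths. In the quasi-parabolic case, choosing an arbitrary elliptic $g$ in the stabiliser of $\xi$ gives no reason for the conjugates $h^ngh^{-n}$ to be pairwise distinct or to have small displacement on a \emph{common} long segment; the cleaner route is to take two loxodromics with the same attracting point $\xi$ but distinct repelling points, manufacture from them an element (or subgroup) that is not loxodromic yet has unbounded orbits, and then invoke the parabolic elimination already established.
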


In other words, acylindrical actions can be either elliptic, lineal or of general type. The following may then be taken as a definition -- a group $G$ is acylindrically hyperbolic if it is not virtually cyclic and admits an acylindrical action on a hyperbolic space with unbounded orbits. For such groups, one can ask if it is possible to achiev a complete classification of acylindrical actions on hyperbolic spaces. This allows us to define the corresponding subposet $\AHG$ of $\HlG$ as $$\AHG = \{ [X] \in \HlG \mid G \acts \Ga(G,X) \text{ is acylindrical} \}.$$

In the next section, we begin the survey of results that study the structure of $\AHG$, and how this influenced the direction of subsequent research on solvable groups.


\section{Acylindrically hyperbolic groups}\label{sec:ahgrps}

It follows easily from Theorem \ref{thm:acylclass} that if the group is not acylindrically hyperbolic, then either $\AHG = \{[G]\}$ or $G$ is virtually cyclic and $\AHG = \{ [G], [X]\}$, where $X$ is a finite generating set for $G$. Similarly, it follows that for an acylindrically hyperbolic group $G$, $$\AHG = \AH_e(G) \sqcup \AH_{gt}(G) = \{[G]\} \sqcup \AH_{gt}(G),$$ and so the classification problem in this case reduces to studying acylindrical general type actions of the group. In practice, however, this might be very hard to do due to the large cardinality of this poset. 

\begin{theorem}\cite[Theorem 2.6]{ABO}\label{thm:ahisbig} For an acylindrically hyperbolic group, $|\AHG| \geq 2^{\aleph_0}$. Further, $\AHG$ contains a copy of $P(\omega)$; the poset of all subsets of $\N$, ordered by inclusion.
\end{theorem}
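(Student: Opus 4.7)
My plan is to construct an order-preserving injection $P(\N) \hookrightarrow \AHG$, which simultaneously yields both the cardinality lower bound and the embedding of $P(\omega)$. Since $G$ is acylindrically hyperbolic, it admits a generating set $X$ such that $G \curvearrowright \Gamma(G, X)$ is acylindrical and of general type. Theorem \ref{thm:acylclass} then produces an infinite family $\{g_i\}_{i \in \N}$ of pairwise independent loxodromic elements, and letting $E(g_i)$ denote the maximal virtually cyclic subgroup containing $g_i$, a standard result in the theory of hyperbolically embedded subgroups (due to Dahmani-Guirardel-Osin) ensures that the collection $\{E(g_i)\}_{i \in \N}$ is hyperbolically embedded in $(G, X)$.

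Given any $A \subseteq \N$, I would define the generating set
$$ X_A \;=\; X \;\cup\; \bigcup_{i \notin A} E(g_i). $$
The first key technical step is to show that the Cayley graph $\Gamma(G, X_A)$ is hyperbolic and that $G \curvearrowright \Gamma(G, X_A)$ is cobounded and acylindrical; equivalently, that $[X_A] \in \AHG$. This should follow from the fact that any subcollection of a hyperbolically embedded collection is again hyperbolically embedded, so coning off the cosets of $\{E(g_i)\}_{i \notin A}$ preserves both hyperbolicity of the Cayley graph and acylindricity of the action.

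Once this is in hand, the map $A \mapsto [X_A]$ is easily seen to be order-preserving: if $A \subseteq B$, then $X_B \subseteq X_A$, and the inclusion-reversing property of $\preccurlyeq$ gives $[X_A] \preccurlyeq [X_B]$. For injectivity, observe that if $i \notin A$, then $E(g_i) \subseteq X_A$ forces $g_i$ to have bounded $X_A$-orbit and therefore be elliptic in $G \curvearrowright \Gamma(G, X_A)$; conversely, if $i \in A$, then independence of the loxodromic family $\{g_j\}$ together with the hyperbolically embedded structure keeps $g_i$ loxodromic in this action. Since the set of elliptic (respectively loxodromic) elements is an invariant of the equivalence class of the action, any two distinct subsets $A \neq B$ yield distinct structures, completing the order-embedding.

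The principal obstacle is verifying acylindricity of $G \curvearrowright \Gamma(G, X_A)$ for arbitrary, possibly infinite, $A$. Acylindricity is delicate and not preserved under naive enlargements of the generating set; the crucial input is precisely the hyperbolically embedded nature of $\{E(g_i)\}$, together with the preservation of loxodromicity for indices $i \in A$. Both require a careful analysis of the relative Cayley graph obtained by coning off the cosets of an (infinite) subfamily of elementary subgroups, which is exactly the situation the Dahmani-Guirardel-Osin framework is built to handle.
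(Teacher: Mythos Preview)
Your approach is genuinely different from the paper's. The paper does not cone off elementary closures of loxodromics directly in $G$; instead it proceeds by a two-step reduction. First, it uses the general fact that whenever $H$ is a hyperbolically embedded subgroup of $G$ there is an order-embedding $\AH(H)\hookrightarrow\AHG$. Second, since every acylindrically hyperbolic group contains a hyperbolically embedded subgroup isomorphic to $F_2\times K$ with $K$ finite (Dahmani--Guirardel--Osin), it suffices to embed $P(\omega)$ into $\AH(F_2\times K)$, and this last step is done via small cancellation theory, producing uncountably many pairwise inequivalent general-type actions of any non-elementary hyperbolic group. Thus all the delicate construction work happens inside the free group, where one has much better control; your approach, by contrast, attempts to build the structures directly in $G$ and would, if it worked, give a more geometric and self-contained argument.

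There is, however, a real gap that you do not flag. You assert that ``a standard result \ldots\ ensures that the collection $\{E(g_i)\}_{i\in\N}$ is hyperbolically embedded in $(G,X)$,'' but the relevant theorem in Dahmani--Guirardel--Osin (their Theorem~6.8) only covers \emph{finite} collections of elementary closures of pairwise non-commensurable loxodromics. For an arbitrary infinite independent family this fails in general; to get a jointly hyperbolically embedded infinite family one must choose the $g_i$ with quantitative separation (for instance, translation lengths growing fast enough relative to the constants involved), and then separately argue that coning off any infinite subfamily still yields a hyperbolic Cayley graph with an acylindrical action. You identify acylindricity as the ``principal obstacle,'' but the earlier step---the joint hyperbolic embedding of an infinite family---is already unsupported, and without it neither the hyperbolicity of $\Gamma(G,X_A)$ for $A$ with infinite complement nor the claimed persistence of loxodromicity of $g_i$ for $i\in A$ follows. (As a minor aside: an order-preserving injection is weaker than an order-embedding; your loxodromic/elliptic argument does in fact give order-reflection, but you should state this explicitly rather than conflate the two notions.)
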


The proof of this statement makes use of hyperbolic structures on groups induced from hyperbolic structures on subgroups. Specifically, one can show that if $H$ is a hyperbolically embedded subgroup of $G$, then $\AH(H)$ embeds into $\AHG$. Hyperbolically embedded subgroups are a generalization of peripheral subgroups of relatively hyperbolic groups; we omit going to into details here and refer the curious reader to \cite[Section 5]{ABO}. However, the result is applied to prove that $\AHG$ is sufficiently complicated as follows: every acylindrically hyperbolic group $G$ contains a hyperbolically embedded subgroup isomorphic to $F_2\times K$, where $F_2$ is free of rank $2$ and $K$ is a finite group \cite[Theorem 2.24]{dgo}. It then suffices to show that $\mathcal {AH}(F_2\times K)$ is sufficiently complicated, and the latter poset is much easier to understand. To this end, we use small cancellation techniques to construct many general type actions for any non-elementary hyperbolic group \cite[Theorem 2.11]{ABO}, thereby also generalizing a former result of Ivanov.

The statement of \cite[Theorem 2.11]{ABO} is infact much stronger. Indeed, it additionally shows that hyperbolic structures are not determined by the set of their loxodromic elements, even though  equivalent actions have the same sets of loxodromic elements. The theorem shows that for every non-elementary $[X] \in \AHG$, there are uncountably many inequivalent acylindrically hyperbolic structures with the same set of loxodromic elements. 

Besides the classification problem, another motivation for studying $\AHG$ comes from the desire to understand \emph{universal} actions. An element $g \in G$ is called a \emph{generalized loxodromic} if $g$ acts as a loxodromic element in \emph{some} acylindrical action of $G$ on a hyperbolic space. An acylindrical action of $G$ on a hyperbolic space $X$ is said to be universal if every generalized loxodromic is a loxodromic for $G \acts X$. Not every acylindrically hyperbolic group admits a universal action (eg: Dunwoody's group \cite{dunwoody}). In the context of $\AHG$, one can consider the related question of when the poset is \emph{accessible}.

\begin{definition}
We say that a group $G$ is \emph{$\AH$-accessible} if $\AHG$ contains the largest element.
\end{definition}

If a largest acylindrical action exists, it is necessarily universal. While there exist $\AH$-inaccessible groups -- even finitely generated or finitely presented ones (see \cite[Section 7.1]{ABO}) -- many groups traditionally studied in the geometric group theory are $\AH$-accessible.

\begin{theorem}\cite[Theorem 2.18]{ABO} The following groups are $\AH$-accessible.
\begin{enumerate}
\item[(a)] Finitely generated relatively hyperbolic groups whose parabolic subgroups are not acylindrically hyperbolic.
\item[(b)] Mapping class groups of punctured closed surfaces. The largest action is the action on the associated Curve complex.
\item[(c)] Right-angled Artin groups. The largest action is the action on the extension graph. 
\item[(d)] Fundamental groups of compact orientable $3$-manifolds with empty or toroidal boundary. The largest action is either on a point or on the Bass-Serre tree associated to the JSJ decomposition of the space. 
\end{enumerate}
\end{theorem}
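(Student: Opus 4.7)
My strategy for each of the four classes is the same: identify a natural candidate $[X_0]\in\AHG$ for the largest element, verify that the action $G\curvearrowright X_0$ is acylindrical, and then show that every other acylindrical structure $[Y]\in\AHG$ is dominated by $[X_0]$. By Lemma \ref{lem:MS}, the last step amounts to showing that the Schwarz--Milnor generating set $S_{X_0}$ associated to $G\curvearrowright X_0$ satisfies $\sup_{s\in S_{X_0}}\|s\|_{S_Y}<\infty$; equivalently, every element of $G$ with bounded $X_0$--displacement has bounded $Y$--displacement. The candidate spaces are essentially given in the theorem statement, and their acylindricity is established in the literature (Bowditch for the curve complex, Kim--Koberda for the extension graph, relatively hyperbolic structure theory for the coned--off Cayley graph, and the geometric/JSJ decomposition for the $3$--manifold Bass--Serre tree).

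For case (a), I take $X_0$ to be the coned--off Cayley graph $\widehat\Gamma(G,S\cup\mathcal P)$, where $S$ is a finite generating set and $\mathcal P$ is the collection of peripheral subgroups. Elements with bounded $X_0$--displacement essentially split into those of bounded ordinary word length (finitely many modulo the $G$--action) and those lying in some peripheral subgroup $P$. The hypothesis that $P$ is not acylindrically hyperbolic is precisely what forces the restriction $P\curvearrowright Y$ to fall into the elliptic or virtually cyclic case of Theorem \ref{thm:acylclass}, and in either case the elements of $P$ needed for $S_{X_0}$ have uniformly bounded $\|\cdot\|_{S_Y}$. For cases (b), (c), and (d), I would run the analogous argument: in each setting there is a classification of loxodromic elements for the candidate action (pseudo-Anosov classes, elements in ``RAAG-loxodromic'' normal form, elements not fixing a vertex of the tree) and a companion description of point/simplex stabilizers (stabilizers of multicurves in $\operatorname{MCG}$, parabolic subgroups of RAAGs, vertex groups of the JSJ tree). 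In each case, stabilizers are either virtually abelian/cyclic or inductively smaller instances of the same class of groups, and one can bootstrap Theorem \ref{thm:acylclass} (together with the no--AH--subgroup behavior forced by the specific algebraic structure) to control these stabilizers in an arbitrary acylindrical action $G\curvearrowright Y$.

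The main obstacle in every case is the domination step, and more specifically controlling the image of the stabilizers (peripheral, curve--stabilizing, or vertex) in a putative competing acylindrical action. For (a) this control is immediate from the non--AH hypothesis on parabolics; for (b), (c), (d) it requires an additional structural input ensuring that these stabilizers cannot contribute loxodromic elements to $G\curvearrowright Y$ that are invisible to $X_0$ (for mapping class groups one uses that the only ``unseen'' elements would be non-pseudo-Anosov, which are generalized elliptic; for RAAGs one uses Kim--Koberda's characterization of generalized loxodromics; for $3$--manifolds one uses that inside a Seifert--fibered or cusped hyperbolic piece every generalized loxodromic is detected by the tree). Once these structural inputs are in place, the uniform bound $\sup_{s\in S_{X_0}}\|s\|_{S_Y}<\infty$ follows, giving $[Y]\preccurlyeq [X_0]$ in $\AHG$ and thereby $\AH$-accessibility.
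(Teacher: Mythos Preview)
The survey paper you are working from does not contain a proof of this theorem; it simply quotes \cite[Theorem~2.18]{ABO} and adds the remark that \cite{ABO} in fact proves the stronger \emph{strong} $\AH$-accessibility. There is therefore no proof in the present paper to compare your proposal against.

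For what it is worth, your overall strategy---fix the natural candidate action $G\curvearrowright X_0$, quote acylindricity from the literature, and then prove domination by showing that the subgroups which are elliptic for $X_0$ (peripherals, curve/vertex stabilizers, parabolic subgroups of RAAGs) are forced to be elliptic in every acylindrical action via Theorem~\ref{thm:acylclass}---is indeed the shape of the argument in \cite{ABO}. The places where your sketch would need real work are: (i) upgrading ``each peripheral $P$ has bounded orbits in $Y$'' to a \emph{uniform} bound over all peripheral cosets, which is what the Schwarz--Milnor comparison actually requires; and (ii) in cases (b)--(d), replacing the informal ``bootstrap'' by the precise classification of generalized loxodromics in each class (pseudo-Anosovs for mapping class groups, elements not conjugate into a join subgroup for RAAGs, and the geometric decomposition for $3$--manifold groups). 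These are exactly the structural inputs that \cite{ABO} supplies case by case; your proposal correctly identifies that they are needed but does not supply them.
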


In fact, the work in \cite{ABO} actually proves something stronger. It shows that the groups listed in the above theorem are \emph{strongly} $\AH$-accessible. i.e. they contain a largest acylindrical action that dominates all other (not necessarily cobounded) acylindrical actions, with the actions listed above being the largest in this weaker setting. 

Following an early draft of \cite{ABO}, parts (b), (c), and the special case of part (d) when the 3-manifold has no Nil or Sol in its prime decomposition of the above theorem are also proven in \cite{ABD}. \cite{ABD} also adds to the list of examples of $\AH-$accessible groups using different methods, as summarized in the following theorem. 

\begin{theorem}\cite[Theorem A]{ABD} Every hierarchically hyperbolic group admits a largest acylindrical action. 

In particular, this includes groups that act properly and cocompactly on a special CAT(0) cube complex, and more
generally any cubical group which admits a factor system. This includes right angled Artin groups, right-angled Coxeter groups, and many examples from \cite{HagenSusse}.
\end{theorem}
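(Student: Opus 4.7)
The plan is to locate the largest acylindrical action directly inside the hierarchical structure of $G$ and then verify maximality. A hierarchically hyperbolic group comes equipped with a collection of hyperbolic spaces organized by a nesting relation $\sqsubseteq$ with a unique $\sqsubseteq$-maximal element; let $X_0$ denote the associated top-level hyperbolic space. By results of Behrstock--Hagen--Sisto, the action $G\acts X_0$ is cobounded and acylindrical, so by Lemma~\ref{lem:MS} it defines an element $[X_0]\in\AHG$. The goal is to prove $[Y]\preccurlyeq[X_0]$ for every $[Y]\in\AHG$.

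Fix a cobounded acylindrical action $G\acts Y$ representing $[Y]$, with basepoint $y_0\in Y$, and fix a Schwarz--Milnor generating set $S_0$ for $[X_0]$. The task reduces to establishing the uniform bound $\sup_{s\in S_0} d_Y(y_0,sy_0)<\infty$; equivalently, that the orbit map $G\to Y$ is coarsely Lipschitz with respect to the $X_0$-word metric. This would exhibit a coarsely $G$-equivariant Lipschitz map $\Ga(G,S_0)\to Y$ and yield $[Y]\preccurlyeq[X_0]$.

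The main technical step is a loxodromic/elliptic comparison. Using the hierarchical distance formula together with BBF-type projection machinery, one shows that every generalized loxodromic of $G$ (an element acting loxodromically in \emph{some} acylindrical hyperbolic action of $G$) already acts loxodromically on $X_0$. Contrapositively, every element with bounded orbit on $X_0$ has bounded orbit on $Y$. This is proved by tracking the set of hierarchy domains on which a given element has unbounded orbit: these sets are organized compatibly under $\sqsubseteq$, and when they are non-empty the top-level projections detect the unboundedness in $X_0$; when they are empty, acylindricity on $Y$ combined with the HHG axioms forces boundedness on $Y$.

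The main obstacle is upgrading the pointwise statement ``each $s\in S_0$ is elliptic in $Y$'' to the \emph{uniform} bound $\sup_{s\in S_0} d_Y(y_0,sy_0)<\infty$ needed for the Lipschitz estimate. This is the heart of the argument and requires the uniformity built into the HHG axioms---bounded geodesic image and uniform constants in the distance formula---together with the acylindricity of $G\acts Y$, which controls how large a set of elements can have near-equal translation length on a long segment in $Y$. Once this uniformity is secured, the Lipschitz property follows, completing the proof that $[X_0]$ is the largest element of $\AHG$.
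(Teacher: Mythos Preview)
This survey does not supply its own proof; it simply cites \cite[Theorem~A]{ABD}. Assessing your outline on its own terms and against the actual strategy of \cite{ABD}:

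There is a genuine gap at the step you yourself flag as ``the heart of the argument.'' The loxodromic/elliptic comparison yields at best that $X_0$ is a \emph{universal} acylindrical action. But universal is strictly weaker than largest: as recorded in Section~\ref{sec:ahgrps} of this survey, for every non-elementary $[X]\in\AHG$ there are uncountably many pairwise inequivalent acylindrical structures with \emph{identical} sets of loxodromic elements. So knowing that each $s\in S_0$ is elliptic on $Y$ says nothing about $\sup_{s\in S_0} d_Y(y_0,sy_0)$, and neither the acylindricity of $G\curvearrowright Y$ (which bounds the number of elements moving \emph{two distant points of $Y$} a small amount, not the $Y$--displacement of elements known only to move a basepoint of $X_0$ a small amount) nor the uniform HHG constants supply the missing mechanism. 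There is a second, independent issue: a group carries many HHG structures, and the top space of an arbitrary one need not be largest --- for instance, a hyperbolic group viewed as hyperbolic relative to an infinite quasiconvex subgroup has a coned-off top space strictly dominated by its Cayley graph in $\AHG$.

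The argument in \cite{ABD} does not attempt a direct Lipschitz comparison with an arbitrary acylindrical $Y$. Instead it first \emph{modifies} the given hierarchy, producing a new HHG structure whose $\sqsubseteq$--maximal hyperbolic space has an explicit description (essentially a cone-off of a Cayley graph along a specified family of hierarchically quasiconvex subsets); maximality is then read off from that explicit model rather than from any loxodromic-set comparison.
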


The result about right angled Artin groups has also been generalized in another direction to graph products of groups. 

\begin{theorem}\cite[Corollary D]{MV} 
    Let $\Ga$ be a finite simplicial graph and let $\mathcal{G} = \{ G_v \mid v \in V(\Ga) \}$ be
a collection of infinite groups. Suppose that for each isolated vertex $v \in V(\Ga)$,
the group $G_v$ is strongly $\AH$-accessible. Then the associated graph product of group $\Ga\mathcal{G}$ is strongly $\AH$-accessible.
Furthermore, suppose that  \begin{enumerate}
\item $\Ga$ has no isolated vertices,
\item $X$ is
the Cayley graph of $\Ga \mathcal{G}$ with respect to the generating set $S = \{ G_v \mid v \in V(\Ga)\}$, 
\item $\mathcal{C}X$ is the contact graph associated to $X$.
\end{enumerate}

Then the action $\Ga \mathcal{G} \acts \mathcal{C}X$,is the largest acylindrical action of $\Ga \mathcal{G}$ on a hyperbolic metric
space.
\end{theorem}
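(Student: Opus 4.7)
The plan is to address the second part of the theorem first (no isolated vertices), since it carries the essential geometric content, and then reduce the first part to it. For the no-isolated-vertex case, I would use Genevois' quasi-median structure on the Cayley graph $X = \Gamma(\Gamma\mathcal{G}, S)$: this equips $X$ with a canonical system of hyperplanes whose stabilizers are conjugates of ``link subgroups'' of the form $G_v \times \langle G_u : u \in \operatorname{link}(v) \rangle$. The contact graph $\mathcal{C}X$ is then shown to be hyperbolic (in fact often a quasi-tree), and one verifies using standard combinatorial arguments in the quasi-median setting that $\Gamma\mathcal{G} \acts \mathcal{C}X$ is cobounded and acylindrical, producing an element $[\mathcal{C}X] \in \AH(\Gamma\mathcal{G})$.

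The heart of the proof is universality: for any acylindrical action $\Gamma\mathcal{G} \acts Y$ on a hyperbolic space, one must produce a coarsely $\Gamma\mathcal{G}$-equivariant Lipschitz map $\mathcal{C}X \to Y$. The key lemma is that every non-isolated vertex subgroup $G_v$ acts with bounded orbits on $Y$. To establish this, pick $w \in V(\Gamma)$ adjacent to $v$, so that the subgroup $H := G_v \times G_w \leq \Gamma\mathcal{G}$ has two commuting infinite factors. The restricted action $H \acts Y$ is acylindrical, and Theorem \ref{thm:acylclass} leaves three possibilities: $H$ cannot be virtually cyclic (a direct product of two infinite groups never is); and $H$ cannot act non-elementarily, because an acylindrically hyperbolic group has every infinite normal subgroup containing a loxodromic whose centralizer is virtually cyclic, which is incompatible with the presence of two commuting infinite normal factors $G_v, G_w$ that intersect trivially in the graph product. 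Hence $H$, and in particular $G_v$, has bounded orbits on $Y$. The domination map $\mathcal{C}X \to Y$ is then built by sending each hyperplane to a point in the (bounded-diameter) orbit of its stabilizer in $Y$, using the fact that adjacent hyperplanes have commensurate stabilizers up to a bounded factor.

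For the first part of the theorem (isolated vertices present), $\Gamma\mathcal{G}$ splits as a free product of the graph product over the non-isolated part (which is strongly $\AH$-accessible by the second part) and the free product $G_{v_1}*\dots*G_{v_k}$ of the isolated vertex groups (strongly $\AH$-accessible by hypothesis, since a free product of strongly $\AH$-accessible groups is strongly $\AH$-accessible via the Bass-Serre tree). Combining these factor actions with the Bass-Serre tree of the top-level free product splitting yields the required largest acylindrical action for $\Gamma\mathcal{G}$.

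The main obstacle I expect is promoting the ``elliptic vertex groups'' lemma into a genuine domination map. Since $Y$ is not assumed to be cobounded, Lemma \ref{lem:MS} is unavailable, and the map $\mathcal{C}X \to Y$ must be constructed intrinsically from hyperplane orbits; showing that this assignment is well-defined up to bounded error and coarsely Lipschitz requires careful control of how stabilizers of adjacent hyperplanes interact within $Y$. A secondary technical difficulty is verifying acylindricity of $\Gamma\mathcal{G} \acts \mathcal{C}X$ in the first place, which requires combinatorial estimates on parallelism classes of hyperplanes specific to graph products.
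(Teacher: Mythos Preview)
The paper is a survey and does not supply a proof of this theorem; it merely cites \cite{MV} and adds the single remark that the result ``relies on work from \cite{Gen}, which shows that the space $X$ in the statement above is a quasi-median space, allowing for a well defined contact graph $\mathcal{C}X$ to be constructed in \cite{MV}.'' There is therefore no proof in the paper to compare your proposal against.

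That said, your outline is consistent with the paper's one-line indication and with the strategy of \cite{MV}: you correctly invoke Genevois' quasi-median structure on $X$, identify the key step as showing that every non-isolated vertex group $G_v$ acts elliptically in any acylindrical action (via the commuting infinite factor $G_w$ for $w$ adjacent to $v$, together with Theorem~\ref{thm:acylclass} and the fact that direct products of two infinite groups are never acylindrically hyperbolic), and then promote this to a domination statement. Your handling of the isolated-vertex case via free-product decomposition is also the expected reduction. The obstacles you flag---constructing the coarse Lipschitz map to a non-cobounded $Y$, and verifying acylindricity of the contact-graph action---are genuine and are precisely where the technical work in \cite{MV} lies, so your proposal accurately reflects both the shape of the argument and where the effort is concentrated.
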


The above theorem relies on work from \cite{Gen}, which shows that the space $X$ in the statement above is a quasi-median space, allowing for a well defined contact graph $\mathcal{C}X$ to be constructed in \cite{MV}.

Recently, yet another class of groups was proven to have a largest acylindrical action. In the following theorem, the term \emph{admissible graph of groups} refers to a class of groups introduced by Croke and Kleiner to abstract the structure of $\pi_1 M$, when $M$ is a non-geometric graph manifold \cite{crokekleiner}. Roughly speaking, an admissible graph of groups is a nontrivial finite graph of groups $\mathcal G$ where each edge group is $\z^2$ and each vertex group $G_\mu$ has infinite cyclic center $Z_\mu$ with quotient a non-elementary hyperbolic group. Additionally, the various edge groups need to be pairwise non-commensurable inside each vertex group.

\begin{theorem}\cite[Corollary 5]{HRSS} Let $\mathcal{G}$ be an admissible graph of groups, and let $G = \pi_1\mathcal{G}$. Then $G$ is $\AH-accessible$ and the largest action is the action on the associated Bass–Serre tree.
\end{theorem}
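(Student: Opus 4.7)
The plan is to establish two claims about $G = \pi_1\mathcal{G}$ acting on the Bass--Serre tree $T$: (i) $G \acts T$ is acylindrical, so that $[T] \in \AHG$; and (ii) every $[Y] \in \AHG$ satisfies $[Y] \preccurlyeq [T]$. Trees are $0$-hyperbolic and the $G$-action on $T$ is cobounded for any finite graph of groups, so only acylindricity needs serious verification in (i).

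For (i), the admissibility hypotheses supply the needed control: edge stabilizers are copies of $\z^2$, and at each vertex any two incident edge stabilizers are non-commensurable inside the vertex stabilizer. Given a geodesic $[u,v]\subset T$ of length exceeding some $R$, the common stabilizer of $u$ and $v$ equals the intersection of edge stabilizers along $[u,v]$. After crossing two edges meeting at a common vertex $\mu$, the running intersection drops from $\z^2$ to a subgroup of infinite index by non-commensurability, hence to a cyclic subgroup, which is either trivial or the central $Z_\mu$. Pushing across consecutive vertices, the central subgroup $Z_\mu$ itself changes with $\mu$, so the intersection eventually collapses to a subgroup of uniformly bounded size, yielding acylindricity constants $(R,N)$.

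For (ii), fix $[Y] \in \AHG$ and let $G_\mu$ be a vertex group with center $Z_\mu = \langle z\rangle$. First, $z$ must act elliptically on $Y$: otherwise $z$ is loxodromic, but $G_\mu \subseteq C_G(z)$ would then be virtually cyclic by the standard property that centralizers of loxodromics in acylindrical actions on hyperbolic spaces are virtually cyclic, contradicting that $G_\mu/Z_\mu$ is non-elementary hyperbolic. Next, no element of $G_\mu$ is loxodromic on $Y$: if $g \in G_\mu$ were loxodromic, applying Theorem \ref{thm:acylclass} to the restricted acylindrical action of $\langle g,z\rangle$ forces this subgroup to be virtually cyclic, hence a non-trivial power of $z$ coincides with a non-trivial power of $g$, making $z$ loxodromic and contradicting the previous step. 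Theorem \ref{thm:acylclass} then forces $G_\mu$ to act elliptically on $Y$. Edge stabilizers $\cong \z^2$ are abelian and not virtually cyclic, so the same theorem makes them elliptic as well.

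With every vertex and edge stabilizer of $G\acts T$ elliptic in $Y$, a standard construction yields a coarsely $G$-equivariant Lipschitz map $T\to Y$: send each vertex of $T$ to a basepoint of its stabilizer's bounded orbit and extend across edges using the bounded diameters of edge-stabilizer orbits. Via Lemma \ref{lem:MS}, this Lipschitz property translates to a generating set representing $[T]$ whose elements have uniformly bounded word length in any generating set representing $[Y]$, i.e.\ $[Y] \preccurlyeq [T]$. The hardest step I expect is the acylindricity verification in (i): the non-commensurability hypothesis is formulated inside each vertex group modulo its own center $Z_\mu$, but $Z_\mu$ varies with $\mu$ along a geodesic in $T$, and maintaining uniform control across these changing central quotients -- without invoking special geometric features like a proper CAT(0) action on the associated graph manifold universal cover -- requires careful bookkeeping.
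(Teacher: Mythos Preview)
The paper is a survey and does not give its own proof of this theorem; it merely cites the result as \cite[Corollary~5]{HRSS}. So there is no in-paper argument to compare against.

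That said, your outline follows the expected strategy from the original source and is broadly sound. A few remarks on the content itself. For step~(ii), your argument that every vertex group $G_\mu$ acts elliptically in any acylindrical hyperbolic action is correct and is the heart of the matter: the infinite cyclic center forces ellipticity via the virtually-cyclic-centralizer property, and then Theorem~\ref{thm:acylclass} rules out lineal and general type for $G_\mu$ since $G_\mu$ is not virtually cyclic and has no loxodromics. The passage from ``all vertex groups elliptic'' to $[Y]\preccurlyeq[T]$ via an equivariant coarse-Lipschitz map $T\to Y$ is indeed standard; just note that you need uniform bounds on orbit diameters, which follows because there are only finitely many conjugacy classes of vertex groups.

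For step~(i), be aware that the paper's informal description of ``admissible'' omits a condition present in the Croke--Kleiner definition: for each edge $e$ with endpoints $\mu,\nu$, the images of $Z_\mu$ and $Z_\nu$ in the edge group $\cong\z^2$ must together generate a finite-index subgroup (equivalently, they are not commensurable inside the edge group). Without this, your sketch stalls: after two edges the pointwise stabilizer drops to something commensurable with $Z_{v_1}$, but you need the transversality of $Z_{v_1}$ and $Z_{v_2}$ inside the shared edge group to force the intersection to become finite at the next step. Your closing caveat correctly flags this as the delicate point, but the resolution is precisely this extra axiom rather than mere bookkeeping.
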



\section{Solvable groups}\label{sec:solvgrps}

As seen from the results stated in the previous section, the class of acylindrically hyperbolic groups has a ``wild" behaviour, which makes obtaining a complete description of $\HlG$ (even $\AHG$) too intractable. Further, the proof of Theorem \ref{thm:ahisbig} suggests that free subgroups of rank $\geq 2$ could be a source of similar issues in classifying hyperbolic actions of any group that admits a non-elementary hyperbolic action. 

It is therefore, natural, to restrict our focus to the smaller class of groups where such issues do not arise -- this makes the class of solvable groups a natural candidate to consider. Indeed, solvable actions contain no non-elementary free subgroups, and thus admit non general type actions.

Another motivation to focus on solvable groups comes from the following fact: For every $n \in \N$, the authors of \cite{ABO} succeeded in producing examples of groups $H_n$ such that $|\Hll_{\ell}(H_n)| = n$; as well as examples of groups $G_n$ such that $|\Hll_{gt}(G_n)| = n$ (see \cite[Theorems 4.22 and 4.28]{ABO}). However, producing examples of groups with finitely many quasi-parabolic structures remained elusive. As solvable groups can only admit quasi-parabolic non-elementary actions, studying the structure of $\HlG$ for a solvable group $G$ could provide an indication to a source of examples in this vein.

\subsection{Abelian-by-cyclic and metabelian groups}

The first step in this direction was taken by the author in \cite{Qp}, which describes a part of the poset for groups of the form $G =H \wrp \z$; see \cite[Theorem 1.5]{Qp}. In particular, the work from \cite{Qp} describes the poset $\HlG$ completely for the Lamplighter groups $\mathcal{L}_n = (\z/ n\z) \wrp \z$ for $n \geq 2$, and shows that $|\Hll_{qp}(\mathcal{L}_n)|$ is twice the number of proper divisors of $n$. To the best of our knowledge, these were the first examples of groups with finitely many quasi-parabolic structures produced in the literature.

The main tool used in this work was the machinery of confining subsets from \cite{Amen}, which allows us to classify quasi-parabolic actions of $G$ in terms of \emph{confining} subsets of $H$; see \cite[Section 4 and Proposition 4.6]{Amen}. This machinery applies best in the case when the group has abelianization of rank 1, and was also used in \cite{AR,Largest} to describe $\HlG$ in the cases for solvable Baumslag Solitar groups $BS(1,n) = \z\left[ \frac{1}{n} \right] \rtimes \z$ and fundamental groups of the Anosov mapping torus $\z^2 \rtimes_\phi \z$, for $\phi \in SL(2, \z)$; see \cite[Theorem 1.1]{AR} and \cite[Theorem 1.3]{Largest}. Roughly speaking, if $G = H \rtimes \z$, then $Q \subset H$ is confining if elements of $H$ are \emph{attracted} into $Q$ under the action of $\z$, and $Q$ should be \emph{almost closed} under the group operation. For the precise definition, we refer the reader to \cite[Section 4]{Amen}. Aside from the inherent interest of classifying hyperbolic actions, the problem is also connected to the computation of Bieri-Neumann-Strebel (BNS) invariants of finitely generated groups \cite{brown}.

The paper \cite{ABR2} followed from the realization that the classification results contained in \cite{Qp, AR, Largest} fit within a common algebraic framework. By identifying this common framework and developing tools for working with groups that fit within it, the authors were able to classify the hyperbolic actions of numerous abelian-by-cyclic groups, and recover the key results from \cite{Qp, AR}. The main idea introduced in \cite{ABR2} was a method to convert the classification problem into a commutative algebra problem. The work utilizes confining subsets, radix representations, completions, and valuations on commutative rings, thereby surprisingly building strong relations between hyperbolic actions and commutative geometry. Before stating the main results, we describe the common framework in which they apply.

Consider a ring $R$ that is generated as a $\z$--algebra by an element $\gamma$. Assuming that $\gamma$ is neither a unit nor a zero divisor, we may define an ascending HNN extension $G\colon= G(R,\gamma)$ of $R$ by the endomorphism of (the abelian group) $R$ defined by multiplication by $\gamma$.  Assuming some natural restrictions on the algebra of $R$, one may write elements of $R$ as \emph{radix representations} of the form $a_0+a_1\gamma+a_2\gamma^2+\cdots$ and ensure that these are somewhat well-behaved. Whenever this setup holds, we can derive a description of the poset $\mathcal H(G)$ in terms of the \emph{$(\gamma)$-adic completion $\widehat R$ of $R$}. 

\begin{theorem}\label{thm:main}\cite[Theorem 1.1]{ABR2}
 Under the assumptions on $G$ as above, the poset $\mathcal H(G)$ consists of the following structures (see Figure \ref{fig:metabelian}): a single elliptic structure, which is dominated by a single lineal structure, and two subposets $\mathcal P_-(G)$ and $\mathcal P_+(G)$ that intersect in the single lineal structure. All other structures from $\mathcal P_+(G)$ and $\mathcal P_-(G)$ are quasi-parabolic.  The subposet $\mathcal P_+(G)$ is isomorphic to the poset of ideals of the $(\gamma)$-adic completion $\widehat R$ considered up to multiplication by $\gamma$. The poset $\mathcal P_-(G)$ is a lattice. Moreover, each element of $\mathcal P_+(G)$ can be represented by a quasi-parabolic action on a simplicial tree.
\end{theorem}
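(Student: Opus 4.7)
The plan is to reduce the classification problem for $G = G(R,\gamma)$ to commutative algebra via the confining subsets machinery of \cite{Amen}, using radix representations as the bridge between the two worlds. First I would establish the skeleton of the poset: since $G$ is a metabelian ascending HNN extension of the abelian group $R$, it contains no non-abelian free subgroup and hence admits no general-type hyperbolic action; by Theorem~\ref{ClassHypAct}, every element of $\HlG$ is therefore elliptic, lineal, or quasi-parabolic. The unique elliptic structure is $[G]$. Because the abelianization of $G$ has rank one (the image of $R$ is killed by the HNN relation, leaving only the stable letter), there is up to scaling a unique pseudocharacter $G\to \R$, which via Lemma~\ref{lem:constolineal} yields a single orientable lineal structure; a direct check using the HNN structure rules out further lineal structures.

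Next I would parameterize the quasi-parabolic structures. By the results of \cite{Amen}, every quasi-parabolic structure on $G = R \rtimes_\gamma \z$ arises from a confining subset $Q \subseteq R$ under the action of either the stable letter $t$ or its inverse $t^{-1}$, up to a natural equivalence relation. This naturally splits the quasi-parabolic structures into the two subposets $\mathcal{P}_+(G)$ (those for which $t^{-1}$, acting as multiplication by $\gamma$, is the contracting direction) and $\mathcal{P}_-(G)$ (those for which $t$ is contracting); the two subposets meet exactly at the lineal structure, where the Busemann character becomes a genuine homomorphism and neither direction is preferred.

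The heart of the argument is the identification $\mathcal{P}_+(G) \cong \{\text{ideals of } \widehat R\}/\sim_\gamma$. Every $r \in R$ admits a radix representation $r = a_0 + a_1\gamma + a_2\gamma^2 + \cdots$ with digits drawn from a bounded set, yielding a canonical embedding $R \hookrightarrow \widehat R$ into the $(\gamma)$-adic completion. Given a $t^{-1}$-confining subset $Q \subseteq R$, I would show that its closure $\overline{Q} \subseteq \widehat R$ is an ideal: the absorption condition $\gamma Q \subseteq Q$ (up to bounded error) passes cleanly to $\gamma \overline{Q} \subseteq \overline{Q}$ in the completion, and the almost-closure of $Q$ under addition becomes true additive closure in $\overline{Q}$ once the bounded error is absorbed into the topology. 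Conversely, pulling an ideal $I \trianglelefteq \widehat R$ back to $R$ (suitably thickened by a bounded set) recovers a confining subset. Two confining subsets produce equivalent quasi-parabolic structures precisely when their associated ideals differ by a power of $\gamma$, since conjugating $Q$ by $t$ rescales it by $\gamma$. The tree representations for elements of $\mathcal{P}_+(G)$ would follow from an explicit Bass-Serre-type construction associated to each ideal, using the valuation-like structure the ideal induces on $G$.

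The lattice structure on $\mathcal{P}_-(G)$ would be handled by a separate, more combinatorial analysis: since $\gamma$ is not a unit in $R$, there is no inverse-direction completion in which $R$ embeds, and the $t$-confining subsets turn out to be governed by a rigid finite combinatorial datum extracted from the leading-coefficient behavior of radix representations, giving a lattice under the natural intersection and sum operations. The main obstacle I anticipate is precisely the correspondence between $\mathcal{P}_+(G)$ and ideals of $\widehat R$ modulo $\gamma$: one must carefully control the bounded errors built into the confining conditions, verify that the resulting map is well-defined, injective, and surjective up to $\gamma$-equivalence, and check its compatibility with the partial order on both sides. This is precisely where the hypotheses guaranteeing well-behaved radix representations --- rather than the generic HNN extension structure --- are used in an essential way.
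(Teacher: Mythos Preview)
Your proposal is correct and follows essentially the same approach the paper describes: the survey summarizes the proof from \cite{ABR2} as first associating to each hyperbolic action a confining subset (unique up to multiplication by $\gamma$) via the machinery of \cite{Amen}, and then identifying the confining subsets with ideals of the $(\gamma)$-adic completion using radix representations and valuations---precisely the bridge you outline. Your sketch of the $\mathcal P_+(G)$ correspondence and the tree constructions matches the paper's account; the only place you go slightly beyond the survey's description is in your speculative handling of $\mathcal P_-(G)$, where the paper merely records that this lattice is in general harder to describe and defers the details to \cite{ABR2}.
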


The proof of Theorem \ref{thm:main} first associates to each hyperbolic action a confining subset (unique up to multiplication by $\gamma$), and then describes the confining subsets in terms of ideals of the completion; this strategy is used for other results contained in the paper as well. The theorem broadly expands our understanding of $\mathcal H(G)$ for a large class of groups $G$.  In particular, roughly half the work of describing $\mathcal H(G)$ is reduced to the algebraic problem of classifying the ideals of the completion $\widehat R$ up to equivalence of ideals under multiplication by $\gamma$. In general though, the lattice $\mathcal P_-(G)$ described in Theorem \ref{thm:main} can be hard to get a handle on. However, the work in \cite{ABR2} also completely describes $\mathcal P_-(G)$ for a wide class of metabelian groups $G$, under some additional assumptions.

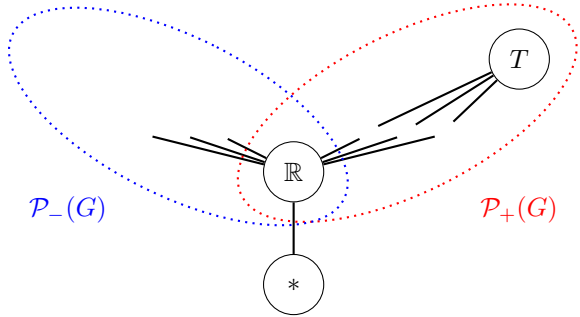
\begin{figure}[ht]
\centering

\begin{tikzpicture}[scale=0.5]

\node[circle, draw, minimum size=0.8cm] (triv) at (0,-3) {$*$};
\node[circle, draw, minimum size=0.8cm] (lin) at (0,0) {$\R$};
\node[circle, draw, minimum size=0.8cm] (bs) at (6,3) {$T$};

\node (low1) at (2,1) {};
\node (low2) at (3,1) {};
\node (low3) at (4,1) {};

\node (up1) at (2,1.1) {};
\node (up2) at (3,1.1) {};
\node (up3) at (4,1.1) {};

\node (Low1) at (-2,1) {};
\node (Low2) at (-3,1) {};
\node (Low3) at (-4,1) {};

\draw[thick] (triv) -- (lin);
\draw[thick] (lin) -- (low1);
\draw[thick] (lin) -- (low2);
\draw[thick] (lin) -- (low3);

\draw[thick] (lin) -- (Low1);
\draw[thick] (lin) -- (Low2);
\draw[thick] (lin) -- (Low3);

\draw[thick] (up1) -- (bs);
\draw[thick] (up2) -- (bs);
\draw[thick] (up3) -- (bs);

\draw[thick, dotted, red, rotate=296.5] (-0.05, 3.4) ellipse (60pt and 140pt);

\draw[thick, dotted, blue, rotate=63.5] (-0.05, 3.4) ellipse (60pt and 140pt);

\node[red] (ideals) at (6,-1) {$\mathcal{P}_+(G)$};

\node[blue] (others) at (-6,-1) {$\mathcal{P}_-(G)$};

\end{tikzpicture}
\caption{$\mathcal{H}(G)$}
\label{fig:metabelian}
\end{figure}

An \emph{abelian-by-cyclic} group is one of the form $G=A\rtimes \z$ where $A$ is abelian. We focus on torsion-free, finitely presented abelian-by-cyclic groups. An abelian-by-cyclic group $G$ can be described as an ascending HNN extension of a free abelian group $\z^n$ \cite{finpres}. That is, there is an endomorphism of $\z^n$, represented by a matrix $\gamma\in M_n(\z)$ with $\det \gamma\neq 0$, such that:
\[
G=\left\langle \z^n,t: tzt^{-1}=\gamma z,\, \forall z\in \z^n\right\rangle.
\]

In this case, results from \cite{ABR2} completely describe $\mathcal H(G)$ under two hypotheses on the matrix $\gamma$; namely that $\gamma$ is expanding (i.e. all eigenvalues of $\gamma$ lie outside the unit disk in $\C$) and that $\z^n$ is a cyclic $\z[x]$-module, where the action of $x$ is induced by that of $\gamma$. We call a matrix $\gamma$ with these  properties \emph{admissible}. 

We consider the prime factorization of the minimal polynomial (which, in this case, is also the characteristic polynomial \cite[Section 7.1]{hoffman_kunze}) $p=up_1^{n_1}\cdots p_r^{n_r}$ in the formal power series ring $\z[[x]]$, where $u\in \z[[x]]$ is a unit and each $p_i\in\z[[x]]$ is an irreducible power series.

\begin{theorem}\label{thm:char=min}\cite[Theorem 1.2]{ABR2}
Let $G$ be an ascending HNN extension of $\z^n$ by an admissible matrix $\gamma$. Then $\mathcal H(G)$ is as described in Theorem \ref{thm:main}. Further, 
\begin{enumerate}
\item $\mathcal P_+(G)$ is isomorphic to to the poset of divisors of $p=up_1^{n_1}\cdots p_r^{n_r}$ considered up to multiplication by a unit;
\item $\mathcal P_-(G)$ is isomorphic to the poset of  subspaces  of $\R^n$ that are invariant under $\gamma$; and
\item each element of $\mathcal P_-(G)$ is represented by an action on a quasi-convex subspace of a Heintze group (negatively-curved groups of the form $N\rtimes \R$, where $N$ is a nilpotent Lie group).
\end{enumerate}
\end{theorem}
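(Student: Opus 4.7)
The plan is to combine Theorem \ref{thm:main} with the algebraic and geometric analysis dictated by admissibility. Admissibility gives $R \cong \z[x]/(p(x))$ as a $\z[x]$-algebra with $x$ acting as $\gamma$ (since $\z^n$ is cyclic over $\z[x]$ and the minimal and characteristic polynomials coincide), and by the standard fact that completion commutes with quotients for Noetherian rings, the $(\gamma)$-adic completion becomes $\widehat R \cong \z[[x]]/(p(x))$.

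For Part (1), Theorem \ref{thm:main} identifies $\mathcal{P}_+(G)$ with ideals of $\widehat R$ modulo multiplication by $x$. I would factor $p = u p_1^{n_1} \cdots p_r^{n_r}$ in the UFD $\z[[x]]$ and define the map $d \mapsto [(d)/(p)]$ from divisors of $p$ (modulo units) to equivalence classes of ideals of $\widehat R$. Well-definedness and order-preservation are immediate, and injectivity follows from the expanding hypothesis, which forces $p(0) \neq 0$ so that $x$ is not a zero divisor in $\widehat R$ and the equivalence modulo $x$ does not collapse distinct divisors of $p$. Surjectivity is the technical heart: one must show that every relevant ideal (namely, those arising from confining subsets in the sense of \cite[Section 4]{Amen}) is equivalent to a principal one generated by a divisor of $p$. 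I would expect this to use a Weierstrass-preparation-type structure for $\z[[x]]$, where the expanding assumption guarantees that non-principal primary parts of ideals can be reabsorbed into divisor factors.

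For Parts (2) and (3), the lattice $\mathcal{P}_-(G)$ arises from the contracting direction: since $\gamma^{-1}$ is a contraction on $\R^n$, the orbit $\bigcup_k \gamma^{-k}(\z^n)$ is dense in $\R^n$, and a negative confining subset of $R$ corresponds (after closure in $\R^n$) to a $\gamma$-invariant real subspace of $\R^n$; intersection and sum of subspaces realize the lattice operations, giving the poset isomorphism of Part (2). For the geometric realization in Part (3), given a $\gamma$-invariant $V \subseteq \R^n$, extend $\gamma$ to a one-parameter subgroup $\{\gamma^t\}_{t \in \R}$ of $\op{GL}_n(\R)$ defined by $\gamma^t = e^{t \log \gamma}$, and form the Heintze group $(\R^n/V) \rtimes \R$; this is negatively curved by Heintze's theorem since the quotient action retains eigenvalues of modulus $>1$. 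The ambient group $G = \z^n \rtimes \z$ embeds into $\R^n \rtimes \R$, descends to act on this Heintze group, and the orbit of a basepoint is quasi-convex by standard arguments in Gromov hyperbolic spaces.

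The main obstacle I anticipate is surjectivity in Part (1): $\widehat R = \z[[x]]/(p)$ admits non-principal ideals (since $\z[[x]]$ is two-dimensional), and one must carefully characterize which ideals arise from confining subsets and show each is equivalent modulo multiplication by $x$ to a principal ideal generated by a divisor of $p$. This commutative algebra, together with the bridge between confining subsets and ideals of the completion, lies at the technical heart of \cite[Theorem 1.2]{ABR2}.
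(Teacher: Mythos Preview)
The paper under review is a survey and does not contain a proof of this statement; it is quoted verbatim as \cite[Theorem 1.2]{ABR2} and the surrounding text only sketches the philosophy (confining subsets $\leftrightarrow$ ideals of the completion, via radix representations and valuations). There is therefore no proof in the paper to compare your proposal against line by line.

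That said, your outline is consistent with the strategy the survey attributes to \cite{ABR2}: the identification $R\cong \z[x]/(p)$ under the cyclic-module hypothesis, passage to $\widehat R\cong \z[[x]]/(p)$, and the translation of $\mathcal P_+(G)$ into ideal theory all match what the paper describes around Theorem~\ref{thm:main}. Your treatment of $\mathcal P_-(G)$ via the contracting direction of $\gamma$ and closures in $\R^n$, and the Heintze realization via the one-parameter extension $e^{t\log\gamma}$, are also in the spirit of the survey's remarks (part (3) of the theorem and the discussion of Heintze groups). You correctly flag the genuine difficulty: surjectivity in Part (1), i.e.\ showing that every ideal of $\widehat R$ arising from a confining subset is equivalent, modulo multiplication by $x$, to one generated by a divisor of $p$. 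The survey does not indicate how \cite{ABR2} resolves this, so whether your suggested Weierstrass-preparation route is what is actually used cannot be judged from this paper alone; you would need to consult \cite{ABR2} directly.
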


Theorem \ref{thm:char=min} reduces the classification of hyperbolic structures of abelian-by-cyclic groups based on admissible matrices $\gamma$ to two algebraic computations: computing the invariant subspaces of $\gamma$ and computing the prime factorization of $p$ in the formal power series ring $\z[[x]]$. Computing the invariant subspaces is straightforward using Jordan normal forms (see, e.g., \cite{invariant}), while computing factorizations in formal power series rings can be solved algorithmically (see, e.g., \cite{elliott}).

Theorems \ref{thm:main} and  \ref{thm:char=min} completely recover the main theorems of \cite{AR} and \cite{Qp}. Moreover, they can be applied to describe the poset $\mathcal H(G)$ for numerous new abelian-by-cyclic groups. We give some examples below;. To further substantiate the extent to which the result can be applied, we note that admissible matrices are very common. In particular, the result applies to an abundant family of examples, including when $p$ is monic, irreducible and expanding or has a square-free constant term. Exact descriptions of the poset in these cases may be found in \cite[Corollaries 1.6 and 1.7]{ABR2}.

\begin{example}\label{ex:lamp}
Consider a lamplighter group $(\z/n\z)\wr \z$, where $n\geq 2$. Then $\mathcal H((\z/n\z)\wr \z)$ is as pictured in Figure \ref{fig:lamplighter}: there is a unique lineal structure and two copies of the poset $\operatorname{Sub}(\z/n\z)$ of subgroups of $\z/n\z$ intersecting in the unique lineal structure; all other structures from $\operatorname{Sub}(\z/n\z)$ represent quasi-parabolic action. Every structure can be represented by action on a tree or on a point. This description of $\mathcal H((\z/n\z)\wr \z)$ was first obtained in \cite{Qp}.

\begin{figure}[ht]
\centering
\begin{tikzpicture}[scale=0.6]

\node[circle, draw, minimum size=0.8cm] (triv) at (0,-3) {$*$};
\node[circle, draw, minimum size=0.8cm] (lin) at (0,0) {$\R$};
\node[circle, draw, minimum size=0.8cm] (bs) at (6,3) {$T_+$};
\node[circle, draw, minimum size=0.8cm] (Bs) at (-6,3) {$T_-$};

\node (low1) at (2,1) {};
\node (low2) at (3,1) {};
\node (low3) at (4,1) {};

\node (up1) at (2,1.1) {};
\node (up2) at (3,1.1) {};
\node (up3) at (4,1.1) {};

\node (Low1) at (-2,1) {};
\node (Low2) at (-3,1) {};
\node (Low3) at (-4,1) {};

\node (Up1) at (-2,1.1) {};
\node (Up2) at (-3,1.1) {};
\node (Up3) at (-4,1.1) {};

\draw[thick] (triv) -- (lin);
\draw[thick] (lin) -- (low1);
\draw[thick] (lin) -- (low2);
\draw[thick] (lin) -- (low3);

\draw[thick] (lin) -- (Low1);
\draw[thick] (lin) -- (Low2);
\draw[thick] (lin) -- (Low3);

\draw[thick] (up1) -- (bs);
\draw[thick] (up2) -- (bs);
\draw[thick] (up3) -- (bs);

\draw[thick] (Up1) -- (Bs);
\draw[thick] (Up2) -- (Bs);
\draw[thick] (Up3) -- (Bs);

\draw[thick, dotted, red, rotate=296.5] (-0.05, 3.4) ellipse (60pt and 140pt);

\draw[thick, dotted, blue, rotate=63.5] (-0.05, 3.4) ellipse (60pt and 140pt);

\node[red] (ideals) at (6,-1) {$\operatorname{Sub}(\z/n\z)$};

\node[blue] (others) at (-6,-1) {$\operatorname{Sub}(\z/n\z)$};

\end{tikzpicture}
\caption{$\mathcal{H}((\z/n\z)\wr \z)$}
\label{fig:lamplighter}
\end{figure}
\end{example}

\begin{example}\label{ex:bs}
Consider the solvable Baumslag-Solitar groups $BS(1,n)=\langle t, a : tat^{-1}=a^n\rangle$ for $n\in \z\setminus \{-1,0,1\}$, and let $n=\pm  p_1^{n_1}\cdots p_r^{n_r}$ be the prime factorization of $n$. Then $\mathcal H(BS(1,n))$ is as pictured in Figure \ref{fig:bs1n}: there is a unique lineal structure, a copy of the poset $2^{\{1,\ldots,r\}}$ of subsets of $\{1,\ldots,r\}$ containing the single lineal structure, while all other structures are quasi-parabolic. There is a single additional quasi-parabolic structure represented by the action on the hyperbolic plane $\mathbb{H}^2$. Every element of $2^{\{1,\ldots,r\}}$can be represented by an action on a tree.  This description of $\mathcal H(BS(1,n))$ was first obtained in \cite{AR}.

\begin{figure}[ht]
\centering

\begin{tikzpicture}[scale =0.6]

\node[circle, draw, minimum size=0.8cm] (triv) at (0,-3) {$*$};
\node[circle, draw, minimum size=0.8cm] (lin) at (0,0) {$\R$};
\node[circle, draw, minimum size=0.8cm] (hyp) at (-3,3) {$\mathbb{H}^2$};
\node[circle, draw, minimum size=0.8cm] (bs) at (6,3) {$T$};

\node (low1) at (2,1) {};
\node (low2) at (3,1) {};
\node (low3) at (4,1) {};

\node (up1) at (2,1.1) {};
\node (up2) at (3,1.1) {};
\node (up3) at (4,1.1) {};

\draw[thick] (triv) -- (lin) -- (hyp);
\draw[thick] (lin) -- (low1);
\draw[thick] (lin) -- (low2);
\draw[thick] (lin) -- (low3);

\draw[thick] (up1) -- (bs);
\draw[thick] (up2) -- (bs);
\draw[thick] (up3) -- (bs);

\draw[thick, dotted, red, rotate=296.5] (-0.05, 3.4) ellipse (60pt and 140pt);

\node[red] (poset) at (6,-1) {$2^{\{1,\ldots,r\}}$};

\end{tikzpicture}
\caption{$\mathcal{H}(BS(1,n))$}
\label{fig:bs1n}

\end{figure}
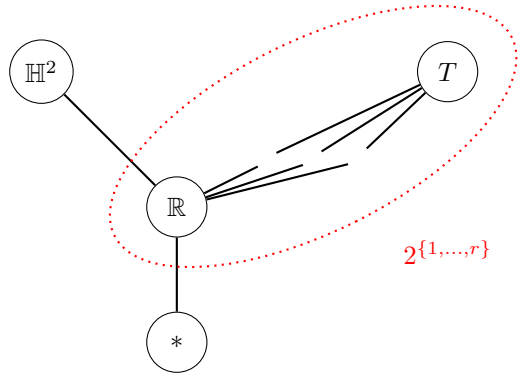
\end{example}

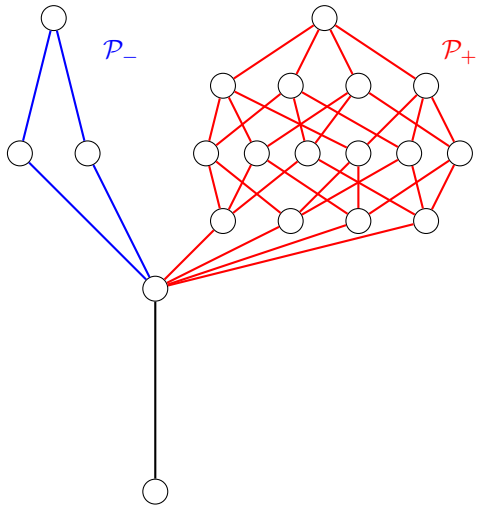
\begin{figure}[ht]
\centering
\begin{tikzpicture}[scale=0.9]

\node[circle, draw, minimum size=0.1cm] (triv) at (0,-3) {};
\node[circle, draw, minimum size=0.1cm] (lin) at (0,0) {};

\node[circle, draw, minimum size=0.1cm] (low1) at (1,1) {};
\node[circle, draw, minimum size=0.1cm] (low2) at (2,1) {};
\node[circle, draw, minimum size=0.1cm] (low3) at (3,1) {};
\node[circle, draw, minimum size=0.1cm] (low4) at (4,1) {};

\node[circle, draw, minimum size=0.1cm] (med12) at (0.75,2) {};
\node[circle, draw, minimum size=0.1cm] (med13) at (1.5,2) {};
\node[circle, draw, minimum size=0.1cm] (med14) at (2.25,2) {};
\node[circle, draw, minimum size=0.1cm] (med23) at (3,2) {};
\node[circle, draw, minimum size=0.1cm] (med24) at (3.75,2) {};
\node[circle, draw, minimum size=0.1cm] (med34) at (4.5,2) {};

\node[circle, draw, minimum size=0.1cm] (up123) at (1,3) {};
\node[circle, draw, minimum size=0.1cm] (up124) at (2,3) {};
\node[circle, draw, minimum size=0.1cm] (up134) at (3,3) {};
\node[circle, draw, minimum size=0.1cm] (up234) at (4,3) {};

\node[circle, draw, minimum size=0.1cm] (up) at (2.5,4) {};

\node[circle, draw, minimum size=0.1cm] (left1) at (-1,2) {};
\node[circle, draw, minimum size=0.1cm] (left2) at (-2,2) {};

\node[circle, draw, minimum size=0.1cm] (left) at (-1.5,4) {};

\node (p+) at (4.5,3.5) [red] {$\mathcal P_+$};

\node (p-) at (-0.5,3.5) [blue] {$\mathcal P_-$};

\draw[thick] (triv) -- (lin);
\draw[thick, red] (lin) -- (low1);
\draw[thick, red] (lin) -- (low2);
\draw[thick, red] (lin) -- (low3);
\draw[thick, red] (lin) -- (low4);
\draw[thick, red] (low1) -- (med12);
\draw[thick, red] (low1) -- (med13);
\draw[thick, red] (low1) -- (med14);
\draw[thick, red] (low2) -- (med12);
\draw[thick, red] (low2) -- (med23);
\draw[thick, red] (low2) -- (med24);
\draw[thick, red] (low3) -- (med13);
\draw[thick, red] (low3) -- (med23);
\draw[thick, red] (low3) -- (med34);
\draw[thick, red] (low4) -- (med14);
\draw[thick, red] (low4) -- (med24);
\draw[thick, red] (low4) -- (med34);
\draw[thick, red] (med12) -- (up123);
\draw[thick, red] (med12) -- (up124);
\draw[thick, red] (med13) -- (up123);
\draw[thick, red] (med13) -- (up134);
\draw[thick, red] (med14) -- (up124);
\draw[thick, red] (med14) -- (up134);
\draw[thick, red] (med23) -- (up123);
\draw[thick, red] (med23) -- (up234);
\draw[thick, red] (med24) -- (up124);
\draw[thick, red] (med24) -- (up234);
\draw[thick, red] (med34) -- (up134);
\draw[thick, red] (med34) -- (up234);
\draw[thick, red] (up123) -- (up);
\draw[thick, red] (up124) -- (up);
\draw[thick, red] (up134) -- (up);
\draw[thick, red] (up234) -- (up);
\draw[thick, blue] (lin) -- (left1);
\draw[thick, blue] (lin) -- (left2);
\draw[thick, blue] (left1) -- (left);
\draw[thick, blue] (left2) -- (left);
\end{tikzpicture}

\caption{Poset $\mathcal H(G(\gamma))$ for the chosen admissible matrix $\gamma$. Every equivalence class to the left of the figure is represented by an action on a Heintze group, while every class to the right is represented by an action on a tree.}
\label{fig:posetexamples}
\end{figure}

\begin{example}[New example]
Consider a group $G(\gamma)=\langle \z^n , t : tzt^{-1}=\gamma z \text{ for all } z\in \z^n\rangle$, where $\gamma\in M_n(\z)$ is  the admissible matrix \[\gamma=\begin{pmatrix} 0 & 0& -210 \\ 1 & 0 & -1\\ 0 & 1 & 0 \end{pmatrix}.\]  Theorem \ref{thm:char=min} can be used to show that $\mathcal H(G(\gamma))$ is as pictured in Figure \ref{fig:posetexamples}.
\end{example}

Part of the description of Theorem \ref{thm:char=min}  holds in greater generality. In particular, we can drop the requirement that $\z^n$ is a cyclic module, while recovering much of the information about the structure of $\HlG$: the general structure of $\HlG$ and parts (2) and (3) of Theorem \ref{thm:char=min} continue to hold. While we lose the exact description of $\mathcal{P}_+(G)$, it continues to remain a lattice (see \cite[Theorem 1.8]{ABR2}).

\subsection{Groups with higher rank abelianizations}

While the machinery of \cite{ABR2} applies to many groups, it is not specific enough to classify the hyperbolic actions of solvable groups whose abelianizations have higher rank. Thus, the techniques described in the section above do not immediately extend  to such groups.  The paper \cite{ABR} began the work necessary to extend the theory in \cite{Amen} to general finitely generated solvable groups. In particular, \cite{ABR} develops a strong definition of confining subsets for semidirect products of the form $H \rtimes \z^n, n \geq 2$, which are (up to equivalence) in on-to-one correspondence with hyperbolic actions of such groups when they are solvable (also more generally, when $H$ is amenable).  

In particular, this allows us to  completely describe $\HlG$  of certain solvable groups that were previously out of reach. In contrast to the examples from the previous sections, whose posets are finite, these groups always admit uncountably many inequivalent actions on lines, because $\z^n$ does when $n \geq 2$ (see \cite[Example~4.23]{ABO}). However, the  remaining hyperbolic structures can be understood for certain solvable groups.

The two main theorems of \cite{ABR}, namely \cite[Theorems 1.1 and 1.2]{ABR}, give a correspondence between \emph{higher rank} confining subsets and quasi-parabolic actions for groups $H\rtimes \z^n$. This should be compared to \cite[Theorem~4.1]{Amen} in the case $n=1$. 

To illustrate the use of this upgraded machinery, \cite{ABR} gives a complete description of $\HlG$ for a class of groups related to solvable Baumslag-Solitar groups.  If $k=p_1^{m_1}\cdots p_n^{m_n}$ is the prime factorization of $k$ with $n \geq 2$, then the \textit{generalized solvable Baumslag-Solitar group} $G_k$ is defined as  
 $G_k := \z\left[\frac1k\right] \rtimes_\gamma \z^n$, where the image under $\gamma$ of the  $i$\textsuperscript{th} generator of $\z^n$ acts on $\z[\frac1k]$ by multiplication by $p_i^{m_i}$. This group has a presentation \[G_k = \left\langle a, t_1,\ldots,t_n \ \big\vert \ [t_i,t_j]=1, t_iat_i^{-1}=a^{p_i^{m_i}} \text{ for all } i,j\right\rangle,\] where $a$ corresponds to a normal generator of the subgroup $\z[\frac{1}{k}]$.  When $k$ is a power of a single prime number, the structure of $\HlG$ can be recovered from the results of \cite{AR} discussed in the previous section.

 \begin{theorem}\label{thm:Z1k}
 For any $k\geq 2$ which is not a power of a prime, the poset $\mathcal H(G_k)$ has the following structure: $\mathcal H_{qp}(G_k)$, the subposet of quasi-parabolic actions, consists of $n+1$ incomparable elements. Each quasi-parabolic action dominates a single lineal action; there are uncountably many lineal actions; and all lineal actions dominate a single elliptic action (see Figure \ref{fig:Z[1/k]Structure}). Moreover, every element of $\mathcal{H}(G_k)$ is represented either by an action on a tree, the hyperbolic plane, or a point.
 \end{theorem}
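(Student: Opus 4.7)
The plan is to apply the higher rank confining subset machinery developed in \cite{ABR} to $G_k = \z[\tfrac{1}{k}] \rtimes_\gamma \z^n$ and analyze each type of hyperbolic structure separately. Since $G_k$ is solvable, only elliptic, lineal, and quasi-parabolic structures can appear (no general-type structures, as solvable groups contain no non-abelian free subgroups), so $\Hll(G_k) = \Hll_e(G_k) \sqcup \Hll_{\ell}(G_k) \sqcup \Hll_{qp}(G_k)$ and the work splits cleanly into three pieces, with the main content living in the quasi-parabolic classification.

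For the quasi-parabolic structures, I would invoke \cite[Theorems 1.1 and 1.2]{ABR} to reduce to classifying higher rank confining subsets $Q \subseteq \z[\tfrac{1}{k}]$ under the $\z^n$-action. The places of $\z[\tfrac{1}{k}]$ suggest $n+1$ natural candidates: for each $i$, the $p_i$-adically bounded set $Q_i = \{x : v_{p_i}(x) \geq -N\}$, and the archimedean set $Q_\infty = \{x : |x| \leq M\}$. For $Q_i$, the generator $t_i$ acts by multiplication by $p_i^{m_i}$ and strictly attracts along $v_{p_i}$, while each $t_j$ with $j \neq i$ acts as a $v_{p_i}$-unit and preserves $Q_i$; this yields an action on a Bass--Serre tree. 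For $Q_\infty$, each $t_i^{-1}$ scales by $p_i^{-m_i}<1$ and attracts simultaneously in the archimedean direction, giving the action of $G_k$ on $\mathbb{H}^2$ in which $a$ acts as a unit translation and each $t_i$ acts as a dilation by $p_i^{m_i}$. To prove exhaustiveness, I would attach to each confining subset $Q$ its Busemann pseudocharacter $\beta_Q$: by solvability this is a homomorphism factoring through $G_k^{ab}$, whose direction in $\op{Hom}(G_k,\R) \cong \R^n$ encodes which $t_i$ strictly attract $Q$. A case analysis over the places of $\z[\tfrac{1}{k}]$ then forces this direction to lie along one of the $n$ positive coordinate rays or along the negative diagonal ray; hybrids are ruled out by the strict attraction/preservation dichotomy required by the higher rank confining condition. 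The resulting $n+1$ structures are pairwise incomparable because their Busemann characters lie on pairwise distinct rays.

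For the lineal structures and the assembly of the poset, I note that since $G_k$ is solvable, every pseudocharacter on $G_k$ is a homomorphism to $\R$; thus by Lemma \ref{lem:constolineal}, orientable lineal structures are parameterized by rays in $\op{Hom}(G_k, \R) \cong \R^n$ (the abelianization $G_k^{ab}$ has free part $\z^n$ together with torsion coming from the relations $a^{p_i^{m_i}-1}=1$). For $n \geq 2$ this gives uncountably many lineal structures, all dominating the single trivial elliptic structure $[G_k]$. Each of the $n+1$ quasi-parabolic structures dominates the unique lineal structure whose ray matches its Busemann character direction, since domination by a QP action pins down the fixed boundary point up to equivalence. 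The representation claim follows from the constructions: each $Q_i$ yields an action on a tree, $Q_\infty$ yields $\mathbb{H}^2$, every lineal structure is equivalent to an action on $\R$ (a degenerate tree), and the elliptic structure is the action on a point.

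The principal obstacle is the exhaustiveness step: showing that no ``hybrid'' confining subsets (e.g.~$Q_i \cap Q_j$) produce new quasi-parabolic structures beyond the $n+1$ listed. This is delicate because the rank-one analogue $BS(1,k)$ (Example \ref{ex:bs}) admits $2^n$ confining subsets indexed by subsets of primes; the genuinely higher-rank condition of \cite{ABR} must therefore be used precisely to eliminate such hybrids. I expect the argument to proceed by showing that any such intersection would force the Busemann direction into the interior of the positive cone, which is incompatible with the strict attraction/preservation dichotomy demanded of a rank-$n$ confining subset. The remaining parts of the proof, once the machinery of \cite{ABR} is in place, are comparatively routine.
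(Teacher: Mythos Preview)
This is a survey paper, and Theorem~\ref{thm:Z1k} is stated without proof: it is simply cited from \cite{ABR}, with the surrounding text only remarking that the natural $\operatorname{PSL}_2(\z[\tfrac{1}{k}])$ actions on $\mathbb{H}^2$ and on Bruhat--Tits trees account for all non-elementary hyperbolic actions of $G_k$. So there is no in-paper argument to compare against; your proposal is effectively a sketch of how the cited source proceeds.

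That said, your outline is aligned with the machinery the paper attributes to \cite{ABR}: reduce quasi-parabolic structures to higher-rank confining subsets of $\z[\tfrac{1}{k}]$, produce the $n+1$ candidates from the places $p_1,\ldots,p_n,\infty$, and parameterize lineal structures by rays in $\operatorname{Hom}(G_k,\R)\cong\R^n$ using amenability to force every pseudocharacter to be a genuine homomorphism. Your identification of the exhaustiveness step as the crux is exactly right, and your diagnosis of why the rank-one count of $2^n$ collapses to $n+1$ in higher rank --- that a hybrid $Q_i\cap Q_j$ would push the Busemann direction off the admissible rays and violate the strict-attraction requirement of the higher-rank confining condition --- is the correct intuition, though it is the step that genuinely needs the detailed arguments of \cite{ABR} rather than a one-line justification.

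Two small points worth tightening. First, your claim that ``by solvability the Busemann pseudocharacter is a homomorphism'' is really an amenability fact (vanishing of $H^2_b$ with trivial real coefficients); solvable suffices only because it implies amenable. Second, you only treat orientable lineal structures; you should say a word about why non-orientable ones do not arise (or are subsumed), since Lemma~\ref{lem:constolineal} only handles the orientable case.
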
 


\begin{figure}[ht]
\centering
\begin{tikzpicture}[scale=0.6]

\node[circle, draw, minimum size=0.8cm] (triv) at (0,-3) {$\acts *$};
\node[circle, draw, minimum size=0.8cm] (lin1) at (1,0) {$\acts \R$};
\node[circle, draw, minimum size=0.8cm] (lin2) at (-1,0) {$\acts \R$};
\node[circle, draw, minimum size=0.8cm] (lin3) at (4,0) {$\acts \R$};
\node[circle, draw, minimum size=0.8cm] (lin4) at (-4,0) {$\acts \R$};
\node[circle, draw, minimum size=0.8cm] (lin5) at (7,0) {$\acts \R$};
\node[circle, draw, minimum size=0.8cm] (lin6) at (-7,0) {$\acts \R$};
\node[circle, draw, minimum size=0.8cm] (lin7) at (-12,0) {$\acts \R$};
\node[circle, draw, minimum size=0.8cm] (lin8) at (12,0) {$\acts \R$};

\node[circle, draw, minimum size=0.8cm] (qp1) at (-7,5) {$\acts T_1$};
\node[circle, draw, minimum size=0.8cm] (qp2) at (-4,5) {$\acts T_2$};
\node[circle, draw, minimum size=0.8cm] (qpn) at (1,5) {$\acts T_n$};
\node[circle, draw, minimum size=0.8cm] (qph) at (4,5) {$\acts\mathbb{H}^2$};

\draw[thick] (triv) -- (lin1);
\draw[thick] (triv) -- (lin2);
\draw[thick] (triv) -- (lin3);
\draw[thick] (triv) -- (lin4);
\draw[thick] (triv) -- (lin5);
\draw[thick] (triv) -- (lin6);
\draw[thick] (triv) -- (lin7);
\draw[thick] (triv) -- (lin8);

\draw[thick] (lin1) -- (qpn);
\draw[thick] (lin4) -- (qp2);
\draw[thick] (lin6) -- (qp1);
\draw[thick] (lin3) -- (qph);

\draw[thick, dotted, red, rotate=90] (0, 0) ellipse (55pt and 420pt);
\draw[thick, dotted, blue, rotate=90] (5,1.5) ellipse (60pt and 210pt);

\node[red] (lineals) at (8,2.5) {$|\Hll_\ell(G)| = 2^\aleph$};
\node[blue] (focals) at (7,7.7) {$|\Hll_{qp}(G)| =n+1$};
\node (others1) at (-1.5,5) {$\cdots$};
\node (others2) at (9.5,0) {$\cdots$};
\node (others3) at (-9.5,0) {$\cdots$};

\end{tikzpicture}
\caption{$\mathcal{H}((\z/n\z)\wr \z)$}
\label{fig:lamplighter}
\end{figure}

Theorem \ref{thm:Z1k} reveals that the natural actions of $\operatorname{PSL}_2\left(\z\left[\frac{1}{k}\right]\right)$ on the hyperbolic plane and Bruhat-Tits trees give rise to all of the non-elementary hyperbolic actions of $G_k$. Using a characterization of the BNS invariant of a finitely generated group in terms of its actions on trees due to Brown \cite{brown}, Theorem \ref{thm:Z1k} can be used to compute the BNS invariants of the groups $G_k$, recovering a result of Sgobbi and Wong \cite{SgobbiWong}. This adds to the motivation to study the structure of $\HlG$ as a source of information to compute the BNS invariant of new groups.

\subsection{Amenable groups}

In this subsection, we focus on results pertaining to different types of \emph{amenable groups} (including nilpotent and solvable groups) and when they admit hyperbolic actions. An amenable group has no free subgroups, and so their non-elementary actions are necessarily quasi-parabolic. The work contained in this section comes from \cite{PropNL}, which focused on Property $\nl$. 

The motivation to study property $\nl$ comes from the natural question to study groups that are ``inaccessible" from the viewpoint of hyperbolic actions. i.e. groups that do not admit any interesting hyperbolic actions.  The approach can be compared with the study of Kazhdan's Property (T), which imposes rigidity conditions on the group by forbidding isometric actions on Hilbert spaces without a global fixed point. 

\begin{definition}
A group $G$ satisfies Property $\nl$ -- standing for \emph{No Loxodromics} -- if no action by isometries of $G$ on a hyperbolic space admits a loxodromic element. A group has Property \emph{Hereditary $\nl$} if all of its finite-index subgroups have Property $\nl$. 
\end{definition}

In other words, a group is $\nl$ if its only possible isometric actions on hyperbolic spaces are elliptic or horocyclic. These are the only types of actions we cannot rule out since every group admits elliptic actions (e.g.\ the trivial action on a single point) and every countably infinite group admits horocyclic actions (e.g.\ combinatorial horoballs on Cayley graphs \cite{GrovesManning} or trees in case the group is not finitely generated \cite{Serre}). 

Besides the obvious finite groups, several families of groups were known to have few or no interesting actions on hyperbolic spaces, such as Burnside groups, Tarski monsters, Grigorchuk's group (as they are infinite torsion groups) and Thompsons group $V$ \cite{anthony}. The paper \cite{PropNL} initiates a formal systematic study of this property, which interestingly, has connections to other fixed point properties (see \cite[Section 6.1]{PropNL}). One important connection is to the structure of $\HlG$, which follows from the appendix of \cite{PropNL}.

\begin{prop}\cite[Corollary 6.7]{PropNL} A group $G$ has Property $\nl$ if and only if $\HlG = \emptyset$.
\end{prop}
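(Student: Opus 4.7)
The plan is to use the classification Theorem \ref{ClassHypAct} for the forward direction and the Busemann pseudocharacter machinery of Section \ref{sec:busemann} (together with a quasi-convex hull trick) for the converse. I read the statement with the convention that $\HlG$ in \cite{PropNL} excludes the trivial elliptic structure $[G]$; equivalently, ``$\HlG=\emptyset$'' means ``$G$ admits no non-trivial cobounded hyperbolic action.''

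\textbf{Forward direction ($\Rightarrow$).} Assume $G$ has Property $\nl$, so no isometric action of $G$ on a hyperbolic space admits a loxodromic element. Any cobounded hyperbolic action falls into one of the four types in Theorem \ref{ClassHypAct}, and the lineal, quasi-parabolic, and general type classes each require the existence of a loxodromic element. So every cobounded hyperbolic action of $G$ is elliptic, and the only elliptic hyperbolic structure on a group is the trivial one, represented by the action on a point with equivalence class $[G]$. Hence $\HlG=\emptyset$ in the above convention.

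\textbf{Reverse direction ($\Leftarrow$).} Assume $\HlG=\emptyset$. Suppose for contradiction that $G$ acts isometrically on some hyperbolic space $X$ with a loxodromic element $g$; the goal is to manufacture a non-trivial element of $\HlG$. By Theorem \ref{ClassHypAct}, such an action is lineal, quasi-parabolic, or of general type. If $G\acts X$ fixes a point of $\partial X$ (the lineal and quasi-parabolic cases), then by the discussion in Section \ref{sec:busemann} the Busemann pseudocharacter $\beta\colon G\to\R$ is non-zero, since $\beta(g)\neq 0$. Lemma \ref{lem:constolineal} applied to $\beta$ then produces an orientable lineal cobounded hyperbolic action on $X_{\beta,C}$, i.e.\ a non-trivial class $[X_{\beta,C}]\in\HlG$, contradicting the hypothesis. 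If $G\acts X$ is of general type, pass to a $G$-invariant quasi-convex subspace $Y\subseteq X$ (for instance the coarse convex hull of an orbit $Gx_0$); then $Y$ is hyperbolic, the induced action $G\acts Y$ is cobounded by construction, and $g$ remains loxodromic. Its equivalence class is a non-trivial element of $\HlG$, again a contradiction.

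\textbf{Main obstacle.} The non-trivial direction is the converse. The quasi-parabolic/lineal subcase is clean thanks to Lemma \ref{lem:constolineal}, which does the heavy lifting of turning a pseudocharacter into an explicit cobounded lineal action. The general type subcase is the more delicate point, and hinges on the standard fact that the orbit in a hyperbolic space admits a $G$-equivariant quasi-convex hull on which $G$ still acts coboundedly and with preserved loxodromic behavior; any careful execution should invoke this quasi-convex hull construction (or, alternatively, the Schwarz-Milnor Lemma \ref{lem:MS} applied after choosing an appropriate generating set from large orbit displacements) to descend to a bona fide element of $\HlG$.
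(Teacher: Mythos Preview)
Your reading of the statement (that ``$\HlG=\emptyset$'' should mean ``$\HlG=\{[G]\}$'') is right and necessary, and the forward direction is fine. The reverse direction, however, has two gaps.

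\textbf{Minor gap (non-orientable lineal).} You write ``If $G\acts X$ fixes a point of $\partial X$ (the lineal and quasi-parabolic cases)\ldots''. A non-orientable lineal action does \emph{not} fix a boundary point; it swaps the two limit points. So the Busemann pseudocharacter is not available on $G$, and Lemma~\ref{lem:constolineal} does not apply directly. The survey itself flags this: see the passage following Proposition~\ref{prop:nosubsemi}, which explains that a genuine generalisation (a Busemann \emph{quasi-cocycle} with values in $\R\rtimes\z/2\z$) is needed for the non-orientable case, citing \cite[Corollary~2.11 and Lemma~2.14]{PropNL}. This is fixable, but not by the argument you wrote.

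\textbf{Main gap (general type).} The claim that the coarse convex hull $Y$ of an orbit $Gx_0$ carries a \emph{cobounded} $G$-action is false in general. Coboundedness would say that every point on a geodesic $[x_0,kx_0]$ lies within uniformly bounded distance of $Gx_0$, i.e.\ that the orbit is quasi-convex in $X$; this fails, for instance, for a doubly degenerate surface-group representation into $\operatorname{Isom}(\mathbb{H}^3)$, where the action is of general type, the limit set is the whole sphere, the convex hull is all of $\mathbb{H}^3$, and the action is certainly not cobounded. Your alternative suggestion (apply Lemma~\ref{lem:MS} to the displacement generating set $S_R=\{g:d_X(x_0,gx_0)\le R\}$) does give a Cayley graph with a Lipschitz orbit map to $X$, but you have not shown $\Gamma(G,S_R)$ is hyperbolic or that the chosen element stays loxodromic there; neither is automatic.

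The survey does not supply its own proof of this proposition; it attributes the result to the appendix of \cite{PropNL}. The argument there is what you are missing: it produces, from an arbitrary (not necessarily cobounded) hyperbolic action with a loxodromic, a genuine cobounded hyperbolic structure on $G$, and this requires more than the convex-hull step you propose.
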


In this setting, the structure of amenable groups allows us to prove many strong results, starting with the following.

\begin{prop}\label{prop:Amenable}\cite[Proposition 3.1]{PropNL}
An amenable group $G$ has Property $\nl$ (resp. hereditary $\nl$) if and only if every homomorphism from $G$ (resp. a finite-index subgroup of $G$) to $\mathbb{R} \rtimes \mathbb{Z}/2\mathbb{Z}$ has image of order at most $2$. If $G$ is finitely generated, this amounts to saying that it does not surject (resp. virtually surject) onto $\mathbb{Z}$ nor~$\mathbb{D}_\infty$. 
\end{prop}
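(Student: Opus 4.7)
The plan is to leverage amenability in two essential ways: first, to rule out general-type hyperbolic actions (since such an action would produce a non-abelian free subgroup via ping-pong), and second, to promote Busemann pseudocharacters into honest homomorphisms, using the fact that every homogeneous quasi-morphism on an amenable group is a homomorphism.

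For the ``if'' direction, suppose $\phi : G \to \mathbb{R} \rtimes \mathbb{Z}/2\mathbb{Z}$ has image of order $> 2$. Every torsion element of $\mathbb{R} \rtimes \mathbb{Z}/2\mathbb{Z}$ has order at most $2$, and its finite subgroups are trivial or $\mathbb{Z}/2\mathbb{Z}$, so the image must contain an infinite-order element, necessarily a non-trivial translation $(r,+1)$ with $r \ne 0$. The action of $G$ on $\mathbb{R}$ via $\phi$ (viewing $\mathbb{R}$ as a hyperbolic space) then admits a loxodromic element, and hence $G$ fails Property $\nl$.

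For the ``only if'' direction, suppose $G$ admits a hyperbolic action with a loxodromic element. By Theorem \ref{ClassHypAct}, such an action is lineal or quasi-parabolic, general type being excluded by amenability. In the quasi-parabolic or orientable lineal case, the Busemann pseudocharacter $\beta : G \to \mathbb{R}$ is non-zero on loxodromics; by amenability it is a homomorphism, giving a map $G \to \mathbb{R} \hookrightarrow \mathbb{R} \rtimes \mathbb{Z}/2\mathbb{Z}$ with infinite image. In the non-orientable lineal case, let $G^+$ be the index-$2$ subgroup fixing both boundary points pointwise; restricting yields a non-zero homomorphism $\beta : G^+ \to \mathbb{R}$, and conjugation by any $g_0 \in G \setminus G^+$ swaps the two boundary points, so $\beta(g_0^{-1} h g_0) = -\beta(h)$ for all $h \in G^+$; in particular $\beta(g_0^2) = 0$ automatically. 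One then checks that the rule $h \mapsto (\beta(h),+1)$ for $h \in G^+$ and $g_0 h \mapsto (-\beta(h),-1)$ for $h \in G^+$ defines a homomorphism $G \to \mathbb{R} \rtimes \mathbb{Z}/2\mathbb{Z}$ with infinite image. The hereditary version follows by applying the above to every finite-index subgroup.

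For the finitely generated case, the image of any homomorphism $G \to \mathbb{R} \rtimes \mathbb{Z}/2\mathbb{Z}$ is finitely generated, so its intersection with $\mathbb{R}$ is isomorphic to $\mathbb{Z}^k$ for some $k \ge 0$, with $k \ge 1$ whenever the image has order $> 2$. Projection onto a single $\mathbb{Z}$ coordinate (which is equivariant under the $\mathbb{Z}/2\mathbb{Z}$-action by negation) yields a surjection $G \twoheadrightarrow \mathbb{Z}$ or $G \twoheadrightarrow \mathbb{Z} \rtimes \mathbb{Z}/2\mathbb{Z} = \mathbb{D}_\infty$; conversely, both $\mathbb{Z}$ and $\mathbb{D}_\infty$ embed in $\mathbb{R} \rtimes \mathbb{Z}/2\mathbb{Z}$. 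The one potentially delicate point is the extension of $\beta$ in the non-orientable lineal case, but the equivariance $\beta \circ \operatorname{Ad}(g_0) = -\beta$ comes for free from the swap of boundary points and forces all compatibility conditions to hold automatically.
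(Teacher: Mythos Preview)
Your proof is correct and follows the approach that the survey attributes to \cite{PropNL}: amenability rules out general type actions (no non-abelian free subgroups), and for lineal or quasi-parabolic actions the Busemann pseudocharacter is an honest homomorphism because bounded cohomology of amenable groups vanishes. The survey does not spell out a proof of this proposition, but the ingredients you use are exactly those discussed in Section~\ref{sec:busemann} and the paragraphs surrounding the statement.

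The one point of divergence worth noting is your treatment of the non-orientable lineal case. You pass to the index-$2$ orientation subgroup $G^+$, obtain a homomorphism $\beta\colon G^+\to\mathbb{R}$, and extend by hand to a map $G\to\mathbb{R}\rtimes\mathbb{Z}/2\mathbb{Z}$ using the equivariance $\beta\circ\mathrm{Ad}(g_0)=-\beta$. This is entirely valid (and your check that $\beta(g_0^2)=0$ makes the extension well-defined). The source paper \cite{PropNL}, as the survey indicates just after Proposition~\ref{prop:nosubsemi}, instead packages this step into a single object: a \emph{Busemann quasi-cocycle} $G\to\mathbb{R}\rtimes\mathbb{Z}/2\mathbb{Z}$ defined directly for actions fixing a pair of boundary points setwise, together with an analogue of Lemma~\ref{lem:constolineal} in that setting. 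Your hands-on extension and their quasi-cocycle construction amount to the same thing; the latter is simply a more uniform formalism that also feeds into the proof of Proposition~\ref{prop:nosubsemi}.
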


Abelian and nilpotent groups are widely studied classes of groups among amenable groups. The rigid structure of both these classes of groups allows us to prove even stronger results about them and Property $\nl$.

\begin{prop}\label{prop:nosubsemi}\cite[Proposition 3.2]{PropNL}
Let $G$ be a group with no non-abelian free sub-semigroup. Then $G$ cannot admit a non-elementary action on a hyperbolic space. If $G$ is amenable and admits a lineal action, then the action factors through a homomorphism $G \to \mathbb{R} \rtimes \mathbb{Z}/2\mathbb{Z}$. 
\end{prop}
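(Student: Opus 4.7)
The plan is to handle the two claims separately. For the first, I would invoke Gromov's classification (Theorem \ref{ClassHypAct}) to observe that a non-elementary action of $G$ on a hyperbolic space $X$ necessarily contains loxodromic elements, and then run a ping-pong argument on the Gromov boundary $\partial X$. In the general type case, I pick two independent loxodromics $g, h$ with $\{g^{\pm\infty}\} \cap \{h^{\pm\infty}\} = \emptyset$; by the classical ping-pong lemma, for sufficiently large $N$ the subgroup $\langle g^N, h^N \rangle$ is free of rank two, and hence contains a non-abelian free sub-semigroup. In the quasi-parabolic case, there is a common boundary fixed point $\xi \in \partial X$, but two loxodromics can still be chosen with distinct ``opposite'' endpoints, and a \emph{semigroup} ping-pong applied to suitable neighborhoods of their attracting points yields a free sub-semigroup on $g^N, h^N$ for large $N$. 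Either situation violates the hypothesis.

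For the second claim, suppose $G$ is amenable with a lineal action on a hyperbolic space $X$, so that $\Lambda(G) = \{\xi_+, \xi_-\}$. The action on $\Lambda(G)$ induces a homomorphism $\sigma \colon G \to \z/2\z$ whose kernel $H$ (of index $1$ or $2$) fixes both $\xi_\pm$. Associated to $\xi_+$ is the Busemann pseudocharacter $\beta \colon H \to \R$ of Section \ref{sec:busemann}. Amenability of $G$, and hence of $H$, makes $H^2_b(H;\R)$ vanish, and then the standard exact sequence identifying homogeneous quasi-morphisms modulo homomorphisms with the kernel of the comparison map $H^2_b(H;\R) \to H^2(H;\R)$ forces $\beta$ to coincide with a genuine homomorphism. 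A direct computation with Busemann cocycles shows that any $g \in G \setminus H$, which swaps $\xi_\pm$, satisfies $\beta(g h g^{-1}) = -\beta(h)$ for all $h \in H$; in particular $\beta(g^2) = 0$ whenever $g \in G \setminus H$, since then $gg^2g^{-1} = g^2$.

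Next, I would assemble $\beta$ and $\sigma$ into a homomorphism $\phi \colon G \to \R \rtimes \z/2\z$, viewing the target as $\mathrm{Isom}(\R)$ via $(a, \epsilon) \cdot x = a + \epsilon x$. On $H$, set $\phi(h) = (\beta(h), +1)$. If $G \neq H$, fix a coset representative $g_0 \in G \setminus H$ and declare $\phi(g_0 h) = (-\beta(h), -1)$ for all $h \in H$. A short case-by-case check, using the homomorphism property of $\beta$ together with the equivariance identity $\beta(g_0 h g_0^{-1}) = -\beta(h)$ and the vanishing $\beta(g_0^2) = 0$, confirms that $\phi$ is a group homomorphism; this sign-bookkeeping is the only genuinely fiddly step. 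Finally, to deduce that the original $G$-action is equivalent to the pullback of the natural $\R \rtimes \z/2\z$-action on $\R$ along $\phi$, I would choose a basepoint $x_0 \in X$ and project $G$-orbits onto a coarsely $G$-invariant quasi-axis joining $\xi_-$ and $\xi_+$: the orbit map $G \to \R$ so obtained agrees up to bounded error with $g \mapsto \phi(g) \cdot 0$, because $\beta$ measures signed displacement along the axis and $\sigma$ records its orientation. By coboundedness of the action and Lemma \ref{lem:MS}, this is a coarsely $G$-equivariant quasi-isometry, so $[X] \in \Hll_\ell(G)$ is indeed the pullback of a hyperbolic structure on $\R \rtimes \z/2\z$ via $\phi$, as required. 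The main obstacle is the assembly of $\phi$ together with the identification of the orbit map with $\phi$ up to bounded error; the remaining ingredients are standard bounded cohomology and boundary dynamics.
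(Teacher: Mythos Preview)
Your argument is correct in both parts. The ping-pong step for non-elementary actions is the standard route and matches what one expects from \cite{PropNL}; the only point to be slightly careful about is arranging the attracting/repelling dynamics in the quasi-parabolic case so that the semigroup ping-pong actually applies, but this is routine once one replaces $g,h$ by suitable powers or inverses so that the common fixed point is repelling for both.

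For the lineal part your approach genuinely diverges from the paper's. As the survey indicates, \cite{PropNL} introduces a \emph{Busemann quasi-cocycle}, i.e.\ a map defined on all of $G$ taking values in the $G$-module $\R$ (with $G$ acting through the sign character $\sigma$) and satisfying the twisted quasi-morphism inequality; amenability is then invoked to straighten this quasi-cocycle to a genuine $1$-cocycle, which \emph{is} a homomorphism $G\to \R\rtimes \z/2\z$. You instead restrict to the orientation-preserving subgroup $H=\ker\sigma$, use the classical Busemann pseudocharacter and vanishing of $H^2_b(H;\R)$ there, and then extend by hand via a coset representative and the equivariance relation $\beta(g_0hg_0^{-1})=-\beta(h)$. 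Your route is more elementary---it avoids introducing twisted bounded cohomology and the new quasi-cocycle notion---at the cost of the explicit case-check for $\phi$ and the choice of $g_0$. The paper's route is cleaner and packages the sign bookkeeping into the module structure, and also yields the non-orientable analogue of Lemma~\ref{lem:constolineal} as a byproduct; but the two arguments are equivalent in content, and your hands-on extension is a perfectly legitimate substitute.
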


The above proposition is interesting as it involves the introduction and construction of a \emph{Buseman quasi-cocylce}. The Busemann quasimorphism is classically defined only for actions fixing a point in the boundary; but here we must introduce a generalization to non-orientable lineal actions, where instead two points on the boundary are fixed setwise. Consequently, an analogue of Lemma \ref{lem:constolineal} is also developedfor non-orientable lineal actions , see \cite[Corollary 2.11 and Lemma 2.14]{PropNL} for details. 

As a direct consequence of the above result, abelian groups, nilpotent groups, groups of sub-exponential growth, virtually nilpotent groups and supramenable groups satisfy $\HlG = \Hll_{e}(G) \sqcup \Hll_\ell(G)$. Moreover, every lineal action of an abelian or nilpotent group is necessarily oriented, and factors through a homomorphism $G \to \R$. This leads to the following results. 

\begin{cor}\cite[Corollary 3.5 and 3.6]{PropNL}
An abelian group $G$ has property $\nl$ (or hereditary $\nl$) if and only if it is a torsion group. A nilpotent group $G$ has property $\nl$ if and only if every homomorphism to $\mathbb{R}$ is trivial.
\end{cor}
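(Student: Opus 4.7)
The plan is to reduce both statements to Proposition \ref{prop:Amenable} via a classification of the nilpotent subgroups of $\R \rtimes \Z/2\Z$ (where $\Z/2\Z$ acts by negation). Nilpotent, and a fortiori abelian, groups are amenable and contain no non-abelian free sub-semigroup, so Proposition \ref{prop:Amenable} applies: property $\nl$ for such a $G$ is equivalent to every homomorphism $\phi: G \to \R \rtimes \Z/2\Z$ having image of order at most $2$. The task therefore reduces to identifying which nilpotent images are possible.

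The core technical step is the observation that every nilpotent subgroup $H \leq \R \rtimes \Z/2\Z$ is either contained in $\R$ or has order exactly $2$. Indeed, suppose $H$ contains both a reflection $\sigma = (r_0, 1)$ and a nonzero translation $\tau = (t, 0)$. A direct computation in the group law gives $\sigma \tau \sigma^{-1} = \tau^{-1}$, so $\langle \sigma, \tau \rangle$ is isomorphic to the infinite dihedral group $D_\infty$, whose lower central series $D_\infty \supset \langle \tau^2 \rangle \supset \langle \tau^4 \rangle \supset \cdots$ never terminates, contradicting the nilpotence of $H$. Hence the only nilpotent subgroups of $\R \rtimes \Z/2\Z$ are subgroups of $\R$ (which are abelian) and the order-$2$ subgroups generated by a single reflection. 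Since $\R$ is torsion-free, a subgroup of $\R$ has order at most $2$ precisely when it is trivial. Consequently, a homomorphism $\phi: G \to \R \rtimes \Z/2\Z$ from nilpotent $G$ has image of order greater than $2$ if and only if its image is a nontrivial subgroup of $\R$, i.e., if and only if the resulting map $G \to \R$ is nontrivial. This yields the nilpotent statement: $G$ has $\nl$ iff every homomorphism $G \to \R$ is trivial.

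For the abelian statement, I would upgrade this to the torsion characterization using injectivity of $\R$ as a $\Z$-module. If $G$ is abelian and torsion, any homomorphism $G \to \R$ has torsion image, which must be trivial. Conversely, if $G$ has an element $g$ of infinite order, then $\langle g \rangle \cong \Z$ embeds into $\R$ via $g \mapsto 1$, and injectivity (equivalently, divisibility) of $\R$ extends this to a nontrivial homomorphism $G \to \R$. For the hereditary version: every subgroup of an abelian torsion group is abelian and torsion, hence has $\nl$ by the previous paragraph; conversely, if $G$ contains an element $g$ of infinite order, every finite-index subgroup of index $n$ contains $g^n$ and is non-torsion, so it fails $\nl$. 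The only genuine obstacle in the argument is the classification of nilpotent subgroups of $\R \rtimes \Z/2\Z$, which reduces to the short dihedral computation above; the rest is routine bookkeeping.
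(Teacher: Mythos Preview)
Your proof is correct and follows essentially the same route the survey indicates just before the corollary: reduce via Proposition~\ref{prop:Amenable} to homomorphisms into $\R \rtimes \z/2\z$, and then use that nilpotent images there must land in $\R$ or have order at most $2$---which is exactly the paper's assertion that ``every lineal action of an abelian or nilpotent group is necessarily oriented, and factors through a homomorphism $G \to \R$.'' Your dihedral computation is precisely the justification of that orientability claim, and the injectivity-of-$\R$ argument for the abelian torsion characterization is the standard one. (A small redundancy: you mention the absence of non-abelian free sub-semigroups, but Proposition~\ref{prop:Amenable} needs only amenability; the sub-semigroup condition belongs to Proposition~\ref{prop:nosubsemi}, which you do not actually invoke.)
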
 

Lastly, we discuss property $\nl$ among solvable groups. Unlike abelian and nilpotent groups which have no non-elementary actions, these can admit (many) focal actions, as seen in Section \ref{sec:solvgrps}. The following result clarifies which solvable groups admit focal actions. 

\begin{prop}\cite[Proposition 3.7]{PropNL}
A finitely generated solvable group is either virtually nilpotent or contains a finite-index subgroup admitting a focal action. 
\end{prop}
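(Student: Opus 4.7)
The plan is to exhibit, inside a suitable finite-index subgroup $H$ of $G$, the data needed to apply the confining-subset machinery of \cite[Section 4]{Amen}. Concretely, I aim to produce a surjection $\chi\colon H \twoheadrightarrow \z$ and, writing $H = N \rtimes \langle t \rangle$ with $N = \ker \chi$, a confining subset $Q \subseteq N$ for the conjugation action of $t$. Once $Q$ is in place, \cite[Proposition 4.6]{Amen} produces the desired cobounded focal action of $H$ on the Bass--Serre tree of the associated ascending HNN extension.

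The first step is to pass to a finite-index subgroup surjecting onto $\z$. Since $G$ is finitely generated and solvable but not virtually nilpotent, the Milnor--Wolf theorem forces $G$ to have exponential growth. I would combine this with a short induction on the derived length of $G$ to show that $G$ virtually surjects onto $\z$: otherwise every finite-index subgroup of $G$ would have finite abelianization, forcing the derived series to collapse in a way that is incompatible with exponential growth. After replacing $G$ by this finite-index subgroup we may assume $G = N \rtimes \langle t \rangle$ with $N$ solvable of strictly smaller derived length.

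The second and main step is the construction of the confining subset $Q \subseteq N$ for $\op{Ad}(t)$, namely a subset with $tQt^{-1} \subseteq Q$, almost closed under the group operation of $N$, and satisfying $\bigcup_{n \geq 0} t^{-n} Q t^{n} = N$. This is where the failure of virtual nilpotency must be used, as it is exactly what guarantees nontrivial expansion in the dynamics of $\op{Ad}(t)$ on $N$. I would split into two cases. If $N$ is polycyclic, then the induced action of $t$ on some abelian section of $N$ must have an eigenvalue off the unit circle (else $G$ would be virtually nilpotent by Mal'cev), and the Jordan decomposition provides a $Q$ satisfying the confining axioms after mild adjustment. If $N$ is not polycyclic, then $N$ fails to be finitely generated as a group but is finitely generated as a $\langle t \rangle$-module; the backward $\op{Ad}(t^{-1})$-orbit of a finite $\langle t \rangle$-generating set of $N$ strictly grows, and this orbit, thickened to be almost closed under the group law, gives $Q$.

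Plugging $Q$ into \cite[Proposition 4.6]{Amen} then yields the focal action of $H$. The main obstacle is the second step: converting the qualitative hypothesis ``not virtually nilpotent'' into the quantitative dynamical statement that a single element expands a distinguished subset of a normal subgroup. In the polycyclic case this essentially reduces to linear algebra of integer matrices, while in the non-polycyclic case it reflects infinite-generation phenomena analogous to those studied in the Bieri--Strebel theory of metabelian groups; unifying the two cases into a single clean argument is the delicate part.
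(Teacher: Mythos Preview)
The survey paper does not contain a proof of this proposition; it merely cites it from \cite{PropNL}. So there is no argument here to compare against, and I can only assess your outline on its own merits.

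Your overall strategy --- pass to a finite-index subgroup surjecting onto $\z$, write $H=N\rtimes\langle t\rangle$, and build a strictly confining subset of $N$ to feed into \cite[Proposition~4.6]{Amen} --- is a natural one and is very much in the spirit of the machinery surveyed here. However, the case analysis in your second step has a genuine gap. You assert that if $N$ is not polycyclic then $N$ fails to be finitely generated, but this is false in general: take $H=(\z\wr\z)\times\z$ with $\chi$ the projection to the right-hand $\z$ factor. Then $N=\z\wr\z$ is finitely generated, not polycyclic, and $t$ acts trivially on it, so neither of your two constructions applies. The point is that the surjection $\chi\colon H\to\z$ cannot be chosen arbitrarily in step one; you must pick it so that the conjugation dynamics of $t$ on $\ker\chi$ actually sees the failure of virtual nilpotency, and you give no mechanism for doing so. A second, related issue is that in your polycyclic case the expanding eigenvalue lives on an abelian \emph{section} of $H$, not on $N$ itself, so ``Jordan decomposition provides $Q$'' really requires first passing to a suitable quotient of $H$; you do not make this passage explicit. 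A cleaner route --- and likely closer to what \cite{PropNL} actually does --- is to first reduce to a metabelian subquotient of $G$ that is still not virtually nilpotent (using structural results for finitely generated solvable groups), and only then split into the polycyclic versus non-finitely-generated-kernel dichotomy, where your two constructions become honest.
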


\begin{cor}\cite[Corollary 3.8]{PropNL}
A finitely generated solvable group does not admit non-elementary actions if and only if it is virtually nilpotent. 
\end{cor}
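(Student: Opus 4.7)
The plan is to prove the two implications separately, the easy direction via Proposition \ref{prop:nosubsemi} and the harder direction via the preceding proposition.

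For $(\Leftarrow)$, assume $G$ is virtually nilpotent. Then $G$ has polynomial growth (Bass--Guivarc'h for nilpotent groups, or Gromov's theorem in general), and in particular contains no non-abelian free sub-semigroup. Proposition \ref{prop:nosubsemi} immediately yields that $G$ admits no non-elementary action on a hyperbolic space.

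For $(\Rightarrow)$, I argue the contrapositive: assume $G$ is finitely generated, solvable, and \emph{not} virtually nilpotent. The preceding proposition produces a finite-index subgroup $H \leq G$ admitting a focal action on some hyperbolic space $X$. Since focal actions are non-elementary by the classification in Theorem \ref{ClassHypAct}, it remains to upgrade a non-elementary action of a finite-index subgroup to a non-elementary action of $G$ itself.

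To carry out this upgrade, I would first replace $H$ by its normal core $N$ in $G$, which is still finite-index in $G$. The restriction of a focal action of $H$ to its finite-index subgroup $N$ remains focal: the fixed boundary point $\xi \in \partial X$ is still $N$-fixed, suitable powers of the loxodromic elements of $H$ lie in $N$ and remain loxodromic, and the limit set is unchanged. So $N \triangleleft G$ with $[G:N]<\infty$ admits a focal action. Using Schwarz--Milnor (Lemma \ref{lem:MS}), realize this action as $N\acts \Gamma(N,S)$ for a suitable generating set $S$ of $N$. Taking a finite set $T$ of coset representatives for $G/N$, the union $S\cup T$ generates $G$; I claim that the cobounded $G$-action on $\Gamma(G, S\cup T)$ provides the desired non-elementary action. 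Indeed, every element of $G$ lies within $T$-distance $1$ of $N$, so $\Gamma(G,S\cup T)$ is (up to quasi-isometry) obtained from $\Gamma(N,S)$ by attaching finitely many isometric copies of $N$ via bounded structure, forcing it to be quasi-isometric to $\Gamma(N,S)$, hence hyperbolic. Since $N\leq G$, the $N$-limit set is contained in the $G$-limit set, and the former is infinite, so the $G$-action is non-elementary.

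The main obstacle is verifying that $\Gamma(G, S\cup T)$ really is quasi-isometric to $\Gamma(N,S)$: a priori, conjugation by elements of $T$ may distort $S$-word lengths of elements of $N$ by unbounded amounts, allowing $T$-excursions to create genuine shortcuts in $N$'s metric inside $\Gamma(G, S\cup T)$. Handling this requires either enlarging $S$ to be closed under $G$-conjugation (while checking that the resulting equivalence class of generating set is preserved in the poset $\Hll(N)$, using that the focal action is $G$-invariant up to a bounded error) or adapting Schwarz--Milnor directly to produce a $G$-equivariant Cayley graph. Once hyperbolicity and coboundedness are in place, non-elementarity follows immediately from the inclusion of limit sets.
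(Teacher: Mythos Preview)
Your $(\Leftarrow)$ direction is fine. The $(\Rightarrow)$ direction, however, runs into a genuine obstruction at exactly the point you flagged: the upgrade from a finite-index normal subgroup to the whole group cannot always be carried out. Here is a concrete witness. Take $A=\left(\begin{smallmatrix}2&1\\1&1\end{smallmatrix}\right)$ and $P=\left(\begin{smallmatrix}1&0\\-1&-1\end{smallmatrix}\right)$, so that $PAP^{-1}=A^{-1}$ and $P^2=I$, and form $G=(\z^2\rtimes_A\z)\rtimes \z/2$, where the involution inverts the $\z$ factor and acts by $P$ on $\z^2$. Then $G$ is finitely generated, solvable, and not virtually nilpotent, and its index-two subgroup $N=\z^2\rtimes_A\z$ admits focal actions on $\mathbb{H}^2$. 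Yet $\operatorname{Hom}(G,\R)=0$: the relation $sts^{-1}=t^{-1}$ forces $t\mapsto 0$, $s^2=1$ forces $s\mapsto 0$, and $tvt^{-1}=Av$ together with $1\notin\operatorname{Spec}(A)$ forces $\z^2\mapsto 0$. Since $G$ is amenable, any Busemann pseudocharacter is an honest homomorphism to $\R$, so $G$ has no focal action; being solvable rules out general type. Hence $G$ admits no non-elementary action whatsoever.

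In particular, your proposed fix---enlarging $S$ to be $G$-conjugation-invariant while staying in the same hyperbolic structure on $N$---must fail here: conjugation by the involution swaps the two \emph{inequivalent} focal structures on $N$ (the two $\mathbb{H}^2$ vertices in Figure~\ref{fig:amt}), so no $G$-invariant representative exists, and $\Gamma(G,S\cup T)$ cannot be quasi-isometric to $\Gamma(N,S)$. The literal statement therefore cannot be proved. The reading under which it is an immediate corollary of the preceding proposition is that $G$ \emph{virtually} admits a non-elementary action if and only if $G$ is not virtually nilpotent; in that form $(\Rightarrow)$ is the proposition itself, and $(\Leftarrow)$ holds because finite-index subgroups of virtually nilpotent groups are again virtually nilpotent, so Proposition~\ref{prop:nosubsemi} applies to each.
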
 

A (virtually) nilpotent group $G$ is (virtually) cyclic if and only if its abelianization is cyclic, in which case $\HlG =\{ [G], [X]\}$, where $X$ is a finite generating set for $G$. If the abelianization of a nilpotent group $G$ has higher rank, then the group admits uncountably many pairwise inequivalent actions on lines (which can be classified by inequivalent homomorphisms $G \to \R$) in addition to the trivial structure. This completely classifies the poset structure for virtually nilpotent groups.


\section{Thompson like groups}\label{sec:thompson}

While the work contained in \cite{PropNL} additionally covers the stability of property $\nl$ under group operations and its connections to other fixed point properties, one of its main results produces many new interesting examples of groups with property $\nl$. The main tool used to produce new examples is a dynamical criterion for groups with ``rich" actions by \emph{homeomorphisms} on compact Hausdorff spaces. Interestingly, the actions by homeomorphims record important information about the group and its subgroups, which in turn restricts the isometric hyperbolic actions such a group can admit. In particular, the criterion applies to many groups acting on the circle or a Cantor set. The result draws inspiration from the strategy used in \cite{anthony} for Thompsons group $V$, but applies to a much larger class of ``Thompson-like" groups. 

\begin{theorem}\cite[Theorem 5.1]{PropNL}Let $G$ be a group acting faithfully on a compact Hausdorff space $X$. Suppose that there is a basis $\mathcal{I}$ of non-dense and non-empty open sets in $X$, such that the following holds:
\begin{itemize}
	\item[1.] \emph{($\mathbf{C}$: Complements)} For every $I \in \mathcal{I}$ there exists $J \in \mathcal{I}$ such that $I^c \subset J$.
    \item[2.] \emph{($2 \mathbf{T}$: Double transitivity)} $G$ preserves $\mathcal{I}$ and acts \emph{doubly transitively} on it: for every two pairs $(I, J), (I', J') \in \mathcal{I}^{(2)}$, there exists $g \in G$ such that $(gI, gJ) = (I', J')$.
    \item[3.] \emph{($3 \mathbf{T}$: Weak triple transitivity)} For every $g, h \in G$ there exist $(M, N, P) \in \mathcal{I}^{(3)}$ such that $(M, N, P)$ and $(g M, h N, P)$ are in the same $G$-orbit in $\mathcal{I}^{(3)}$.
    \item[4.] \emph{($\mathbf{L}$: Local action)} Let $(I, J, K) \in \mathcal{I}^{(3)}$, and let $g, h \in G$ be such that $g I = I$ and $h J = J$. Then there exists $b \in G$ such that $b|_I = g|_I$ and $b|_J = h|_J$ and $b|_K = id|_K$.
\end{itemize}
Then $G$ has no general type hyperbolic actions.
\end{theorem}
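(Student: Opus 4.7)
The plan is to argue by contradiction. Assume $G \acts Y$ is a general type hyperbolic action; by Theorem~\ref{ClassHypAct} we may pick independent loxodromic elements of $G$. The goal is to manufacture, using conditions $\mathbf{C}$, $2\mathbf{T}$, $3\mathbf{T}$, $\mathbf{L}$, a family of subgroups $\{R_I \leq G : I \in \mathcal{I}\}$ that are (a) pairwise conjugate in $G$, (b) pairwise commuting when the corresponding $I$'s are disjoint in $X$, and (c) collectively generate $G$. These three properties together force global constraints on the dynamics of $G \acts \partial Y$ that are incompatible with the action being of general type.

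The first step is to construct the ``rigid stabilizer'' $R_I \leq G$, consisting (at least) of elements acting trivially on $X \setminus I$. The engine is condition $\mathbf{L}$: given $g \in G$ fixing $I$ setwise and any triple $(I,J,K) \in \mathcal{I}^{(3)}$, it produces $b \in G$ with $b|_I = g|_I$ and $b|_{J \cup K} = \mathrm{id}$. By iterating, and using condition $\mathbf{C}$ to exhaust $X \setminus I$ by members of $\mathcal{I}$, one manufactures nontrivial elements of $R_I$. Faithfulness of $G \acts X$ then yields the commutation property (b): if $I \cap J = \emptyset$, then $R_I$ and $R_J$ commute elementwise. Condition $2\mathbf{T}$ makes $\{R_I : I \in \mathcal{I}\}$ into a single $G$-conjugacy class of subgroups. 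Finally, $\mathbf{L}$ combined with $\mathbf{C}$ should also let us decompose an arbitrary $g \in G$ as a product of elements each supported in some $I \in \mathcal{I}$, establishing (c).

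With this setup, I analyse how the $R_I$'s interact with $Y$. If every $R_I$ acted elliptically, property (c) would force $G$ itself to act elliptically, contradicting the presence of loxodromics. So some $R_{I_0}$ contains a loxodromic $a$. For any $J$ disjoint from $I_0$, condition $2\mathbf{T}$ produces $\sigma \in G$ with $\sigma R_{I_0} \sigma^{-1} = R_J$, so $\sigma a \sigma^{-1} \in R_J$ is loxodromic; by (b) the two loxodromics $a$ and $\sigma a \sigma^{-1}$ commute, hence share the same pair of fixed points on $\partial Y$. Sweeping $J$ over many disjoint members of $\mathcal{I}$ and then invoking $3\mathbf{T}$ to shift configurations by arbitrary $g, h \in G$ should force every $G$-conjugate of $a$ to preserve this same pair of boundary points, giving a $G$-invariant subset of $\partial Y$ of cardinality $\leq 2$. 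This contradicts general type, completing the proof.

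The main obstacle, I expect, is the construction of $R_I$ and the verification that $R_I$'s generate~$G$: turning the \emph{local} gluing provided by $\mathbf{L}$ (which handles only three pieces at a time) into \emph{global} rigid stabilizers by iterating $\mathbf{C}$ requires care, and one must check that the process yields nontrivial elements (which is where the hypothesis that sets in $\mathcal{I}$ are non-dense and non-empty, together with faithfulness of $G \acts X$, should be essential). A subtler secondary point is the final step invoking $3\mathbf{T}$ to promote the $G$-invariance of the fixed-point pair from $2\mathbf{T}$-conjugates of $a$ to \emph{all} of $G$: here $3\mathbf{T}$ must do real work, since $2\mathbf{T}$ alone only shuffles pairs, while $G$-invariance is a statement about arbitrary elements.
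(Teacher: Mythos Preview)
This survey does not prove the theorem; it is quoted verbatim from \cite{PropNL} with no argument given, so there is no proof here to compare your proposal against.

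On its own merits, your outline has the right shape---rigid-stabilizer-type subgroups, commutation from disjoint supports, and a boundary-dynamics contradiction---but two of the load-bearing steps are genuine gaps. First, claim~(c) that the $R_I$ generate $G$ does not follow from $\mathbf{L}$ and $\mathbf{C}$ as you describe: condition $\mathbf{L}$ only applies once you already have elements fixing $I$ and $J$ \emph{setwise}, and an arbitrary $g\in G$ need not fix any member of $\mathcal I$, so there is no evident iteration decomposing $g$ into rigidly supported pieces. In the source argument the role of $3\mathbf{T}$ is exactly to bypass generation: given arbitrary $g,h$, it supplies $(M,N,P)$ and $f\in G$ with $fM=gM$, $fN=hN$, $fP=P$, so that $f^{-1}g$ and $f^{-1}h$ now fix $M$ and $N$ setwise and $\mathbf{L}$ becomes applicable directly to the pair one cares about. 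One then works with these auxiliary elements rather than appealing to any global generation statement.

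Second, even granting~(c), the implication ``every $R_I$ elliptic $\Rightarrow$ $G$ elliptic'' is false in general: a group generated by subgroups with bounded orbits can act with unbounded orbits (vertex groups in a Bass--Serre tree, for instance). So the dichotomy you set up collapses. The branch where some $R_I$ contains a loxodromic is handled essentially as you say---commutation with $R_J$ for disjoint $J$ pins down the endpoints, and transitivity propagates this---but the complementary branch needs its own argument, not a one-line reduction to ellipticity of $G$.
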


Combined with some well known properties about the existence (or lack thereof) of unbounded quasimorphisms, \cite{PropNL} produces the following list of examples of groups with property $\nl$. 

\begin{theorem}\cite[Theorem 1.6]{PropNL}
The following groups are hereditary $\nl$:
\begin{enumerate}[noitemsep]
    \item Thompsons group $T$
    \item Higman--Thompson groups $V_n(r)$. As a special case, this recovers Thompsons group $V = V_2(1)$
    \item Some R\"{o}ver--Nekrashevych groups, including Neretin's group
    \item Twisted Brin--Thompson groups $SV_\Gamma$ whenever $S$ is a countable faithful $\Gamma$-set
    \item Higman--Stein--Thompson groups $T_{n_1, \ldots, n_k}$, with $k \geq 1$ and $n_1 = 2, n_2 = 3$
    \item The golden ratio Thompson group $T_\tau$
    \item The finitely presented group $S$ of piecewise projective homeomorphisms of the circle, from \cite{yashS}
    \item Symmetrizations of Higman--Thompson groups $QV_n(r)$ and $QT$ 
\end{enumerate}
\end{theorem}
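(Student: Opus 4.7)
The plan is to leverage the dynamical criterion stated just above to kill general type actions for each group in the list, and then to rule out the remaining lineal and quasi-parabolic actions via two further observations: the absence of unbounded homogeneous quasimorphisms, and the absence of non-trivial homomorphisms to $\R\rtimes\z/2\z$. For the hereditary strengthening we rerun the argument for every finite-index subgroup, which for the (nearly) simple groups in the list is essentially automatic.

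First I would equip each group with a natural faithful action on a compact Hausdorff space: Thompson's $T$ on the circle $S^1$; the Higman--Thompson groups $V_n(r)$ and the twisted Brin--Thompson groups $SV_\Gamma$ on the appropriate Cantor-like spaces; the R\"{o}ver--Nekrashevych and Higman--Stein--Thompson groups on their canonical boundary spaces; the golden ratio Thompson group $T_\tau$ and the piecewise projective group $S$ on their associated circles; and the symmetrizations $QV_n(r)$, $QT$ on the same spaces as their Thompson ancestors. In each case I would take $\mathcal{I}$ to be the standard dyadic (or $n$-adic) subintervals, or the corresponding cylinder sets, and verify the four axioms $(\mathbf{C})$, $(2\mathbf{T})$, $(3\mathbf{T})$, $(\mathbf{L})$. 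The complement condition $(\mathbf{C})$ and double transitivity $(2\mathbf{T})$ follow rather directly from closure under subdivision together with the richness of the prefix-replacement operations defining these groups. Weak triple transitivity $(3\mathbf{T})$ and the local action axiom $(\mathbf{L})$ are more delicate ``cut-and-paste'' statements and require, in some examples, genuine use of the algebraic structure of the group (e.g.\ non-piecewise-linear germs for Neretin's group, or the coordinate-freeness of the action for twisted Brin--Thompson groups). Once these are verified, Theorem~5.1 eliminates all general type hyperbolic actions.

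Next I would eliminate the remaining non-elliptic, non-horocyclic actions. Any quasi-parabolic or orientable lineal action produces a non-trivial Busemann pseudocharacter (Section~\ref{sec:busemann}); since for each of the listed groups the space of homogeneous quasimorphisms is known or easily shown to be trivial---this is classical for Thompson's $T$ via bounded cohomology arguments, with analogous statements in the literature for the other Thompson-like families---neither possibility occurs. For non-orientable lineal actions, Lemma~\ref{lem:constolineal} and its non-orientable analogue developed in \cite{PropNL} reduce us to ruling out non-zero homomorphisms $G \to \R\rtimes \z/2\z$. Because every group on the list is simple, or has trivial abelianization and no index-two subgroup, no such homomorphism exists and the lineal stratum is empty.

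The hereditary strengthening is then essentially automatic for simple groups: every finite-index subgroup coincides with the whole group, so $\nl$ is inherited trivially. For the entries that are only virtually simple, I would restrict the dynamical action to a finite-index characteristic simple subgroup, check that the four axioms are preserved by this restriction, and repeat the quasimorphism/homomorphism argument. The main obstacle in this plan is the verification of $(3\mathbf{T})$ and $(\mathbf{L})$ for the less combinatorially transparent examples---especially Neretin's group and the twisted Brin--Thompson groups $SV_\Gamma$---where the compatibility of ``local'' replacements across a triple of disjoint opens is not immediate from the definitions and requires a tailored use of the rich local structure of the acting group. A secondary obstacle is the uniform vanishing of homogeneous quasimorphisms across the whole list, which in some of the newer examples may need to be bootstrapped from the dynamical criterion itself together with fixed-point arguments on the boundary.
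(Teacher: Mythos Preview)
Your proposal is correct and follows essentially the same approach that the paper indicates: this is a survey and does not give a self-contained proof, but it explicitly states that the result is obtained by combining the dynamical criterion \cite[Theorem~5.1]{PropNL} (which rules out general type actions) with ``well known properties about the existence (or lack thereof) of unbounded quasimorphisms'' (which, together with the Busemann pseudocharacter machinery and the non-orientable analogue from \cite{PropNL}, rules out the lineal and quasi-parabolic strata). Your outline of the hereditary step via (virtual) simplicity is also consistent with how \cite{PropNL} proceeds.
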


In particular, the result on twisted Brin--Thompson groups shows that groups with property $\nl$ are ubiquitous, in the following sense.

\begin{cor}\cite[Corollary 1.7]{PropNL}\label{cor:surpriseqi}
Every finitely generated group quasi-isometrically embeds into a finitely generated simple group, which is hereditary $\nl$.
\end{cor}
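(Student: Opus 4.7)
The plan is to combine part (4) of the preceding theorem (that twisted Brin--Thompson groups $SV_\Gamma$ are hereditary $\nl$ whenever $S$ is a countable faithful $\Gamma$-set) with the embedding results of Belk--Zaremsky on twisted Brin--Thompson groups. The strategy is essentially to exhibit, for a given finitely generated input group, a canonical twisted Brin--Thompson group that serves as the desired target.

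First, given an arbitrary finitely generated group $H$, the plan is to take $\Gamma := H$ and let $S := H$ equipped with the left multiplication action of $\Gamma$ on itself. This action is manifestly free, hence faithful, and $S$ is countable. The construction of the associated twisted Brin--Thompson group $SV_\Gamma$ then makes sense and produces a candidate target group $G := SV_\Gamma$.

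The next step is to invoke the Belk--Zaremsky structural theorem, which asserts that for any finitely generated group $\Gamma$ acting faithfully on a countable set $S$ containing a finite orbit-covering family, the twisted Brin--Thompson group $SV_\Gamma$ is finitely generated and simple, and moreover $\Gamma$ embeds quasi-isometrically into $SV_\Gamma$ (with the embedding realized through the ``copies of $\Gamma$'' sitting naturally inside $SV_\Gamma$). Applied with $S = \Gamma = H$ acting by left multiplication, this yields a quasi-isometric embedding $H \hookrightarrow G$ into a finitely generated simple group $G$.

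Finally, since $S = H$ is a countable faithful $H$-set, part (4) of the theorem immediately gives that $G = SV_\Gamma$ is hereditary $\nl$, completing the proof. The main (really the only) obstacle here is ensuring that the Belk--Zaremsky framework applies in the setting where $S$ is itself the group acting on itself, so that both the finite generation/simplicity conclusion \emph{and} the quasi-isometric embedding $H \hookrightarrow SV_H$ hold simultaneously; once that is in hand, the two theorems fit together with no additional work.
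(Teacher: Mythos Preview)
Your proposal is correct and matches the paper's approach exactly: the paper presents this corollary immediately after the theorem listing hereditary $\nl$ groups, explicitly noting that ``the result on twisted Brin--Thompson groups shows that groups with property $\nl$ are ubiquitous,'' and the deduction is precisely the combination of the Belk--Zaremsky quasi-isometric embedding into $SV_\Gamma$ with item (4) of the theorem. Your choice of $S=\Gamma=H$ with the regular action is the standard one making both ingredients apply simultaneously.
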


We warn the reader that our methods ensure Property $\nl$ under the existence of flexible enough actions on compact Hausdroff spaces, and in no way claims that all ``Thompson-like'' groups have Property $\nl$. Indeed, Braided Thompson groups have rich actions on hyperbolic spaces \cite{bV}, and serve as a counterexample to this notion.

\section{Higher rank lattices}\label{sec:lattices}

Recently, work has emerged pertaining to the hyperbolic actions of irreducible lattices in a semi-simple group $G$ of rank $\geq 2$, which we cover in this section. Note that it follows from \cite{haettel} that if all simple factors
of the ambient product have rank $\geq 2$, then the lattice has property $\nl$. The case when $G$ has simple factors of rank $1$ is covered in \cite{bcfs}. This is an interesting case to consider as rank $1$ simple groups have a natural geometric action on a hyperbolic space, so the lattices in $G$ admit non-trivial actions on hyperbolic spaces via their projections on the rank $1$ simple factors.

The setup considered in \cite{bcfs} is slightly different from that of cobounded hyperbolic actions. Given an action on a hyperbolic space, one can make a larger hyperbolic space containing the first one as a quasiconvex subspace while maintaining important properties of the original action. For instance, this can be achieved by attaching geodesic rays equivariantly. To take this possibility into account, the work in \cite{bcfs} focuses on \emph{coarsely minimal} actions.

\begin{definition} We say that an action $G \acts X$ on a hyperbolic space is coarsely
minimal if $X$ is unbounded, the limit set of $G$ on $\partial X$ is not a single point, and every
quasi-convex $G$-invariant subset of $X$ is coarsely dense.
\end{definition}

Although coarsely minimal actions provide a broader setup than cobounded actions, it closely links to the structure of $\HlG$. Indeed, for any hyperbolic structure, the action is either coarsely minimal or elliptic. Thus, the number of hyperbolic structures is one more than the number of equivalence classes of coarsely minimal actions; where the equivalence on coarsely minimal actions is provided roughly by $G-$equivariant quasi-isometries; we refer the reader to \cite[Definition 3.5]{bcfs} for a precise description. Consequently, if one classifies all coarsely minimal actions up to equivalence, the elements of $\HlG$ can be easily deduced. 

In what follows, the notion of a standard rank one group is taken from \cite[Theorem D]{Amen} -- a locally compact group $G$ is a standard rank one group if it has no nontrivial compact
normal subgroups and either

\begin{enumerate}
\item $G$ is the group of isometries or orientation-preserving isometries of a rank
one symmetric space $X$ of non-compact type, or
\item $G$ has a continuous, proper, faithful action by automorphisms on a locally finite non-elementary tree $T$ , without inversions and with exactly two orbits of vertices, such that the action of $G$ on the set of ends $\partial T$ is 2-transitive.
\end{enumerate}

The symmetric space $X$ in case (1) and the tree $T$ in case (2) are called the model
spaces for the standard rank one group. While standard rank one groups of type (1) correspond to real Lie groups of rank one, type (2) includes simple algebraic groups over non-archimedean local fields of rank one. With this terminology, the main result of \cite{bcfs} is as follows.

\begin{theorem}\cite[Theorem 1.1]{bcfs} Let $N \geq n \geq 0$ be integers. Let $G = \displaystyle \prod_{i=1} ^N G_i$ be a product of $N$ locally compact groups, where  $G_i$ is a standard rank one
group for all $i \in \{1,\cdots, n\}$, and $G_j$ is a simple algebraic group defined over a
local field $k_j$ with $\op{rk}_{k_j}
(G_j ) \geq 2$ for all $j \in \{n + 1,\cdots, N\}$. Let $\G < G$ be a lattice. Assume that $n \geq 2$ or that $N > n$. If $N > 1$, assume in addition that $\G$ has a dense projection to each proper sub-product.
Then any coarsely minimal action of $\G$ on a geodesic hyperbolic space is equivalent to one of the actions
$$\G \to G \xrightarrow[]{pr_i} G_i \to Isom(X_i, d_i) \hspace{10pt} (\text{for }1 \leq i \leq n)$$
where each $X_i$ is a rank-one symmetric space or a tree, corresponding to the standard rank one factor $G_i$ being of type (1) or (2).
\end{theorem}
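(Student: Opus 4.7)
The overall strategy is to reduce the problem to the classification of hyperbolic actions of individual standard rank one groups, which is known from \cite{Amen}: each such group has (up to equivalence) exactly one non-elliptic coarsely minimal hyperbolic action, namely the action on its model space. Concretely, the plan is to show that any coarsely minimal action $\Gamma \acts X$ effectively factors through projection to a single rank one factor $G_i$. This proceeds in three stages: extend the $\Gamma$-action to $G$, annihilate the higher rank factors, and isolate a single rank one factor.

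The first stage is a superrigidity step. Given a coarsely minimal action $\Gamma \acts X$, I would promote it to a continuous $G$-action on a hyperbolic space equivalent to $X$, with the density of the projections of $\Gamma$ to each proper sub-product serving as the essential irreducibility hypothesis. For lineal and quasi-parabolic actions, the Busemann pseudocharacter (Section \ref{sec:busemann}), combined with its non-orientable analogue from \cite{PropNL}, reduces this to extending a (generalized) pseudocharacter from $\Gamma$ to $G$, a problem accessible through bounded cohomology techniques and the fact that continuous pseudocharacters on $G$ are determined by their values on a dense subgroup. For general type actions, the boundary map $\partial G \to \partial X$ built from the $\Gamma$-orbit map has to be pushed through to a $G$-equivariant one via the dense projection hypothesis, in the spirit of Monod--Shalom type superrigidity for bounded cohomology classes on hyperbolic spaces.

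Having extended the action to $G$, the second stage invokes \cite{haettel}: every hyperbolic action of a higher rank simple algebraic group $G_j$ (for $j > n$) has bounded orbits. Simplicity of $G_j$ then forces the $G$-action to factor through the projection to $\prod_{i \leq n} G_i$. In the third stage, suppose two distinct rank one factors $G_i$ and $G_{i'}$ both acted non-elementarily on $X$. Choosing loxodromic elements $g \in G_i$ and $h \in G_{i'}$, the commutation $[g,h]=1$ forces their fixed point sets on $\partial X$ to coincide up to bounded error; since $G_i$ acts (in its model space) with no finite invariant set on the boundary, pushing this back yields a contradiction. Hence exactly one factor $G_i$ with $i \leq n$ acts non-trivially, and the \cite{Amen} classification identifies the action with the natural action on $X_i$. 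Coarse minimality then upgrades the $G_i$-equivariant map $X_i \to X$ to an equivalence.

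The main obstacle is the first (extension) stage, particularly in the general type regime, where the Busemann route is unavailable and one needs a genuine superrigidity argument working with the boundary or the bounded cohomology class of the action on an arbitrary hyperbolic space without additional structure. The second and third stages are essentially formal once extension is in place, and the rank one classification in the fourth is taken as a black box from \cite{Amen}.
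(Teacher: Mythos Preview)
The paper under review is a survey and does not contain a proof of this theorem; it merely states the result with attribution to \cite{bcfs} and then passes to corollaries. There is therefore no ``paper's own proof'' to compare your proposal against.

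That said, a brief comment on your outline. Stages 2 and 3 are reasonable once Stage 1 is in hand, but Stage 1 as you phrase it is not how the argument in \cite{bcfs} proceeds, and indeed the literal statement ``promote the $\Gamma$-action to a continuous $G$-action on a hyperbolic space equivalent to $X$'' is generally false: one cannot extend an isometric action of a lattice to the ambient locally compact group on the same space. The actual mechanism is measurable boundary theory in the Bader--Furman framework: one builds a $\Gamma$-equivariant measurable map from a suitable $G$-boundary (a product of Furstenberg boundaries of the factors) to the Gromov boundary $\partial X$, and then exploits the ergodic-theoretic rigidity of such maps together with the algebraic structure of the factors to force the map to factor through a single rank one factor. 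The Busemann pseudocharacter route you sketch for the lineal and quasi-parabolic cases is closer in spirit to what happens there, but the general type case is handled not by extending the action but by analyzing boundary maps and the tightness/properness of the induced cocycle. So your identification of Stage 1 as the crux is correct, but the method you propose for it would not go through as stated.
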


In other words, the only possible actions on hyperbolic spaces of such lattices arise from their projections to the rank 1 simple factors. The result allows for the classification of the poset $\HlG$ for new groups, including the famous Burger-Mozes groups, which are irreducible lattices $\Ga$ in a product $G_1 \times G_2$ of two standard rank one groups $G_i < Aut(T_i), i =1,2.$ \cite{burgermozes}.

\begin{cor}\cite[Example 1.5]{bcfs} Every Burger–Mozes group has two non-trivial hyperbolic structures. Specifically, the only non-trivial hyperbolic structures of $\G$ come from the actions on the trees $T_1$ and $T_2$.  
\end{cor}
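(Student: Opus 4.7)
The corollary will be deduced from the main classification theorem of \cite{bcfs} applied in the simplest non-trivial case $N = n = 2$. First, I would verify the hypotheses: a Burger--Mozes group $\G$ is by definition an irreducible lattice in $G := G_1 \times G_2$ with each $G_i \leq \Aut(T_i)$ a standard rank one group of type (2). So $N = n = 2 \geq 2$, the higher-rank factor condition is vacuous, and the requirement that $\G$ projects densely to each proper sub-product is precisely the irreducibility of the lattice. Applying the theorem, every coarsely minimal action of $\G$ on a geodesic hyperbolic space is equivalent to one of
$$\G \hookrightarrow G \xrightarrow{\op{pr}_i} G_i \to \op{Isom}(T_i), \qquad i = 1, 2.$$
Combining this with the bijection $\HlG \setminus \{[\G]\} \longleftrightarrow \{\text{equivalence classes of coarsely minimal actions}\}$ recalled just before the theorem, we obtain the upper bound $|\HlG| \leq 3$.

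Next, I would verify that each projection action $\G \curvearrowright T_i$ is itself coarsely minimal, so that both contribute non-trivial elements to $\HlG$. Density of $\pi_i(\G)$ in $G_i$ combined with $2$-transitivity of $G_i$ on $\partial T_i$ makes the full boundary $\partial T_i$ the limit set of $\G$, yielding unboundedness and a non-singleton limit set. Any $\G$-invariant quasi-convex subset of $T_i$ is invariant up to bounded Hausdorff distance under $\overline{\pi_i(\G)} = G_i$, and vertex transitivity of $G_i$ on $T_i$ forces such subsets to be coarsely dense. So both projection actions represent genuine non-trivial hyperbolic structures, giving $|\HlG| \geq 2$ modulo proving the structures are distinct.

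The main obstacle is showing these two projection structures are not equivalent. Since equivalent hyperbolic structures share the same set of loxodromic elements, and $\gamma \in \G$ is loxodromic on $T_i$ iff $\pi_i(\gamma)$ is a loxodromic tree automorphism, it suffices to exhibit $\gamma \in \G$ whose first projection translates along a bi-infinite axis in $T_1$ while the second projection fixes a vertex of $T_2$. With $K_2 \leq G_2$ a compact open vertex stabilizer, the subgroup $H := \G \cap (G_1 \times K_2)$ is infinite (density of $\pi_2(\G)$ in $G_2$ makes $\pi_2(\G) \cap K_2$ infinite, while discreteness of $\G$ in $G$ forces the corresponding elements of $\G$ to be distinct), so $\pi_1(H) \leq G_1$ is infinite. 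Were $\pi_1(H)$ entirely elliptic on $T_1$, Serre's theorem on group actions on trees would force a global fixed point in $T_1 \cup \partial T_1$: a vertex fix places $H$ inside a compact subgroup of $G$, contradicting infiniteness, while a boundary fix is ruled out by combining the commensurating action of the dense subgroup $\pi_1(\G)$ with the $2$-transitivity of $G_1$ on $\partial T_1$. Hence $\pi_1(H)$ contains a loxodromic element, producing the desired distinguishing $\gamma$. Finally, both projection structures contain loxodromic elements so neither equals the trivial structure $[\G]$, and we conclude $|\HlG| = 3$, i.e.\ $\G$ admits exactly the two non-trivial hyperbolic structures coming from $T_1$ and $T_2$.
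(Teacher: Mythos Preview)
The survey itself does not prove this corollary; it simply records it as \cite[Example 1.5]{bcfs} and presents it as an immediate consequence of the main classification theorem quoted just above. Your proposal therefore supplies strictly more than the paper does, and what you supply is the natural deduction: check that a Burger--Mozes lattice satisfies the hypotheses of the theorem with $N=n=2$, invoke the bijection between non-trivial hyperbolic structures and equivalence classes of coarsely minimal actions, and then verify that the two projection actions are genuinely coarsely minimal and genuinely inequivalent.

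All of these steps are sound. The one place I would tighten is the inequivalence argument. Your idea---produce $\gamma\in\G$ with $\pi_1(\gamma)$ loxodromic on $T_1$ and $\pi_2(\gamma)$ elliptic on $T_2$ by looking at $H=\G\cap(G_1\times K_2)$---is correct, and the commensuration argument you sketch for ruling out the horocyclic case does go through (a finite-index subgroup of a horocyclic group is still horocyclic with the same unique fixed end, so density of $\pi_1(\G)$ plus $2$-transitivity of $G_1$ on $\partial T_1$ gives the contradiction). It would read more cleanly, however, to note that Burger--Mozes lattices are uniform, so $H$ is a cocompact lattice in the open subgroup $G_1\times K_2$; since $K_2$ is compact, $H$ then acts properly and cocompactly on $T_1$, which immediately forces $\pi_1(H)$ to contain loxodromics and bypasses the case analysis entirely.
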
 

Other new examples from \cite{bcfs} include producing groups with exactly $n$ hyperbolic structures, for each $n \geq 1$; different from the ones previously produced in \cite{ABO}.

\section{New examples and open questions} 

In this last section, we record some examples of interest and pose related open problems that could inspire future directions of research. 

\paragraph{Groups with finitely many quasi-parabolic actions.}

By Example \ref{ex:lamp}, the lamplighter group $\mathcal{L}_2 = (\z/2\z) \wr \z$ has exactly 2 quasi-parabolic structures. It follows from \cite[Lemma 4.20]{ABO}, that the group $\displaystyle H_i = \bigoplus_{j =1} ^i \mathcal{L}_2$ has exactly $2i$ quasi-parabolic structures for all $i \geq 1$. 

Further, the group $G_6$ from Theorem \ref{thm:Z1k} has exactly $3$ quasi-parabolic structures. Consequently, the groups $K_i = H_i \oplus G_6$ have exactly $2i +3$ quasi-parabolic structures for all $i \geq 1$. This produces groups with exactly $n$ quasi-parabolic structures for every $n \geq 2$, almost completely answering an open problem from \cite{ABO}. It remains to consider the following. 

\begin{ques} Does there exist a group $G$ such that $|\Hll_{qp}(G)| =1$? 
\end{ques}

Such an example, if it exists, could lead to an interesting study of inherent properties that a group must have in order for it to admit only one-quasi-parabolic action. If such a group cannot exist, then the proof of this fact would also be interesting.

\paragraph{Embeddings into groups with property $\nl$.}
Recall that Corollary \ref{cor:surpriseqi} allows us to quasi-isometrically embed any countable group into a finitely generated, simple group with property $\nl$. While this result uses twisted Brin-Thompson groups, we give a new construction that allows us to \emph{embed} any countable group into a simple group with property $\nl$. For this, we need the following result by Osin. 

\begin{theorem}\cite[Theorem 1.1]{osinembed}\label{thm:osinembed} Any countable group $G$ can be embedded into a 2–generated group $C$ such that any two elements of the same order are conjugate in $C$.
\end{theorem}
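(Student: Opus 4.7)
The plan is to construct $C$ as a direct limit $C = \varinjlim_n C_n$, starting from $C_0 = G * F_2$ with $F_2 = \langle x, y\rangle$, and iteratively adding two types of relations: conjugacy relations that identify pairs of same-order elements, and relations expressing elements of $G$ as words in $\{x, y\}$.

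First I would observe that $C_0$ is hyperbolic relative to the peripheral subgroup $G$. This places us in the setting of group-theoretic Dehn filling and small cancellation over relatively hyperbolic groups. In this theory, provided a set of added relators is sufficiently long and generic (satisfies a strong enough small cancellation condition over the current relatively hyperbolic structure), the quotient (a) remains relatively hyperbolic with respect to the image of the peripheral subgroup, and (b) retains $G$ as an embedded subgroup.

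At stage $n$, I would enumerate diagonally across all pairs $(a_n, b_n)$ of same-order elements of $C_n$ and all elements $g_n$ of the original group $G$. The quotient $C_{n+1}$ is formed by adjoining a relation of the form $u_n a_n u_n^{-1} = b_n$ for a sufficiently long generic element $u_n$, together with a relation of the form $g_n = w_n(x,y)$ for a sufficiently long generic word $w_n$. Small cancellation guarantees that the map $G \hookrightarrow C_n \to C_{n+1}$ remains injective and that $C_{n+1}$ remains hyperbolic relative to the image of $G$, allowing the induction to continue. In the limit, $G$ embeds into $C$ (as a direct limit of injections), $C$ is generated by $x$ and $y$ alone (since every element of $G$ is eventually rewritten as a word in them, and $\{x, y\} \cup G$ generated $C_0$), and every pair of same-order elements becomes conjugate at some finite stage and therefore remains conjugate in the limit.

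The main obstacle is the dynamic bookkeeping of the enumeration: new pairs of same-order elements continually appear in the successive quotients, and elements may even change order under the filling relations, so the enumeration must be arranged (e.g.\ by a Cantor-style diagonal scheme) to capture every pair arising in the limit at some finite stage. Equally delicate is the uniformity of the relative hyperbolicity parameters and of the small cancellation constants across the infinite sequence of quotients, since each filling can deteriorate the relative hyperbolicity structure; controlling this deterioration is the deep technical input of the machinery developed in \cite{osinembed}.
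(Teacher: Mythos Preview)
The survey paper does not prove this theorem at all; it is quoted verbatim as \cite[Theorem 1.1]{osinembed} and used as a black box to derive the subsequent propositions about embeddings into $\nl$ groups. So there is no ``paper's own proof'' to compare your proposal against.

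That said, your sketch is a faithful outline of the argument in the cited source \cite{osinembed}: the direct-limit construction starting from $G * F_2$, the iterated small-cancellation quotients over a relatively hyperbolic structure, the two families of relations (conjugating same-order pairs and rewriting generators of $G$ as words in $x,y$), and the diagonal bookkeeping are exactly the ingredients Osin uses. One small correction of emphasis: in Osin's argument the peripheral structure that is preserved at each stage is not just $G$ but a larger finite family of ``suitable'' subgroups (containing in particular the elements $a_n,b_n$ being conjugated), and the injectivity of $G$ in the limit is obtained from the fact that each quotient map is injective on these peripherals, not merely on $G$. Your concern about deterioration of relative-hyperbolicity constants is well placed and is indeed the technical heart of \cite{osinembed}.
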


The above immediately allows us to prove the following. 

\begin{prop} Any countable group $G$ can be embedded into a finitely generated group with property $\nl$. 
\end{prop}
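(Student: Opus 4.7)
The plan is to use Osin's embedding from Theorem~\ref{thm:osinembed}: we obtain an embedding $G \hookrightarrow C$ with $C$ a $2$-generated (in particular, finitely generated) group in which any two elements of the same order are conjugate. It then suffices to prove that any such $C$ has property $\nl$, which is what the remainder of the argument does.

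Suppose towards a contradiction that $C$ acts isometrically on some hyperbolic space $X$ with a loxodromic element $g$. Then $g$ and $g^2$ both have infinite order, so the conjugacy property of $C$ provides some $c \in C$ with $c g c^{-1} = g^2$. Because $g$ is loxodromic, the map $n \mapsto g^n x$ is a quasi-isometric embedding for any basepoint $x \in X$, so the stable translation length
\[
\tau^\infty(h) \;=\; \lim_{n \to \infty} \frac{d_X(x, h^n x)}{n}
\]
is strictly positive at $g$. Since $\tau^\infty$ is a conjugacy invariant of the action and satisfies $\tau^\infty(h^k) = |k|\,\tau^\infty(h)$, the conjugacy of $g$ and $g^2$ forces
\[
\tau^\infty(g) \;=\; \tau^\infty(g^2) \;=\; 2\,\tau^\infty(g),
\]
so $\tau^\infty(g) = 0$, contradicting loxodromicity. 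Hence no isometric action of $C$ on a hyperbolic space admits a loxodromic element, i.e., $C$ has property $\nl$.

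There is no real main obstacle: once Theorem~\ref{thm:osinembed} is taken as a black box, property $\nl$ for $C$ drops out of a one-line translation-length calculation using that $g$ and $g^2$ must be conjugate. The interest of the statement, compared to Corollary~\ref{cor:surpriseqi}, is that one obtains an honest embedding (rather than a merely quasi-isometric one) into a finitely generated $\nl$ group; the trade-off is that Osin's construction does not obviously yield a simple overgroup, so simplicity of the ambient group is lost in this approach.
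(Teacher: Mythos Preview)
Your proof is correct and follows essentially the same approach as the paper: embed $G$ into Osin's group $C$, observe that any infinite-order element is conjugate to its square, and conclude that no element can be loxodromic. The only difference is cosmetic---you phrase the contradiction via stable translation length, while the paper says the cyclic subgroup $\langle c\rangle$ cannot be quasi-isometrically embedded---but these are two ways of saying the same thing.
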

\begin{proof} Given $G$, we use Theorem \ref{thm:osinembed} to embed $G$ into the group $C$. We claim that $C$ has property $\nl$. Indeed, any element $c$ of infinite order is conjugate to $c^2$ in $C$ (since they are both infinite order). It follows that the group $\langle c \rangle$ cannot be quasi-isometrically embedded in any action on a hyperbolic space; which implies that $c$ cannot be a loxodromic element. 
\end{proof}

\begin{remark} Observe that the above proof can be adapted to see that any group with finitely many conjugacy classes also has property $\nl$. In particular, the torsion free finitely generated group with exactly two conjugacy classes constructed in \cite{osinembed} has property $\nl$.
\end{remark}

We can further utilize the result by Osin to prove the claimed stronger result. 

\begin{prop} Every countable group $G$ with an infinite order element can be embedded into a finitely generated, simple group with property $\nl$.    
\end{prop}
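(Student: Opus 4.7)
The plan is to upgrade the argument from the previous proposition by replacing Osin's embedding with a variant that additionally produces a \emph{simple} target group, while preserving the conjugacy feature that drove Property $\nl$. Concretely, I would invoke (or construct, following the small-cancellation-over-relatively-hyperbolic-groups techniques of \cite{osinembed}) the following strengthening of Theorem \ref{thm:osinembed}: every countable group $G$ containing an element of infinite order embeds into a finitely generated simple group $S$ in which any two elements of the same order are conjugate. In the torsion-free case this is essentially immediate, since Osin's target group then has exactly two conjugacy classes and is automatically simple; for general $G$ with torsion, one iterates the construction, imposing further small-cancellation relations at each stage that collapse any would-be proper normal subgroup, with the infinite-order element of $G$ acting as the marker that ensures the embedded copy of $G$ survives the iteration.

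Granting such an $S$, Property $\nl$ follows by the exact argument of the previous proposition. Any infinite-order element $s\in S$ has the same order as $s^2$ and is therefore conjugate to $s^2$ in $S$. If $g\in S$ realises this conjugacy, then $s^{2^k}=g^{k}sg^{-k}$, so $\|s^{2^k}\|_T = O(k)$ with respect to any word metric $d_T$ on $S$. This means $\langle s\rangle$ is exponentially distorted in $S$ and hence cannot quasi-isometrically embed into any metric space on which $S$ acts by isometries. In particular $s$ cannot act loxodromically on any hyperbolic space; since torsion elements are automatically non-loxodromic, $S$ has no loxodromic elements in any hyperbolic action, which is exactly Property $\nl$. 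The fact that $G$ has an infinite-order element ensures that $S$ is genuinely infinite, and the inclusion $G\hookrightarrow S$ provides the required embedding.

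The main obstacle is the first step: producing simplicity together with the conjugacy-by-order property. Osin's original construction guarantees only that normal subgroups of the target group are unions of order-indexed conjugacy classes, but does not rule out non-trivial such unions; for example, the set of all torsion elements could in principle form a proper normal subgroup. To force simplicity, the iteration must add further relations merging the remaining conjugacy classes, and the delicate point is to do this without destroying either the embedding of $G$ or the conjugacy-by-order property already achieved. The hypothesis that $G$ has an infinite-order element is precisely what makes this iteration feasible, since it provides a non-torsion witness that cannot be inadvertently killed at any stage.
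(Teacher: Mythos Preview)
Your plan has a genuine gap: the entire argument rests on a \emph{strengthening} of Osin's theorem that you neither cite nor prove, namely that the target group $C$ can be taken to be simple. You are candid that this is ``the main obstacle'' and sketch an iterative small-cancellation scheme, but the sketch is not a proof --- you yourself flag the ``delicate point'' of merging conjugacy classes without killing the embedded copy of $G$. As written, the proposal reduces the proposition to a harder statement than the one you started with.

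The paper avoids this difficulty entirely with an elementary trick that you are missing. Rather than modifying Osin's construction, one preprocesses the group: first replace $G$ by $H=\bigoplus_{n}(\z/n\z)\oplus G$ (so that $H$ contains elements of every possible order, using the infinite-order element of $G$ for order $\infty$), embed $H$ into some countable simple group $S$, and only then apply the \emph{unmodified} Theorem \ref{thm:osinembed} to $S$, obtaining the $2$-generated group $C$. Property $\nl$ for $C$ follows exactly as in the previous proposition. Simplicity of $C$ is now automatic: every element of $C$ has the same order as some element of $S$ and is therefore conjugate into $S$; hence any nontrivial normal subgroup $N\trianglelefteq C$ meets $S$ nontrivially, so $S\leq N$ by simplicity of $S$, and then $N=C$ since every element of $C$ is conjugate into $S\leq N$. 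No new small-cancellation work is needed --- the hypothesis on $G$ is used only to guarantee that $S$ has elements of all orders.
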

\begin{proof} Given a finitely generated group $G$, consider the countable group $H= \bigoplus_n (\z/n\z) \oplus G$. We first embed $H$ into a countable, simple group $S$. Next, we embed $S$ into the group $C$ from Theorem \ref{thm:osinembed}. Then $G$ embeds into $C$ as well. Clearly, $C$ is finitely generated and has property $\nl$; this follows from the proof of the previous proposition. So, it remains to show that $C$ is simple in this case.

To this end, consider any element $a \in C$. Then $a^C \cap S \neq \{1\}$, as $S \leq C$ contains elements of all orders. This means that all elements of $C$ are conjugate to an element of $S$. Suppose that $C$ contains a non-trivial normal subgroup $N$, we consider the quotient $C/N$. If $a$ is the kernel of this quotient, then so is an element of $S$. As $S$ is simple, this means that $S$ is in the kernel of the quotient, but then so is the entire group $C$. Thus $N= C$, and so $C$ is simple. 
\end{proof}

These different embedding results prompt the following questions.

\begin{ques} Given a finitely generated group $G$, in how many distinct ways can $G$ (quasi-isometrically) embed into a group with property $\nl$?
\end{ques}

\begin{ques} For a countable group $G$ with infinitely many conjugacy classes, what ``algebraic" conditions are sufficient to imply property $\nl$?
\end{ques}

\paragraph{Anosov mapping torii.} An interesting result obtained in \cite{Largest} was the classification of $\HlG$ for the fundamental group of the Anosov mapping torus $\z^2 \rtimes_\phi \z$, where $\phi$ is a matrix in $SL(2, \z)$. The machinery of confining subsets was also used to achieve this result, which were classified in terms of the neighborhoods of the attracting and repelling eigenlines of $\phi$ in $\R^2$. 

\begin{figure}[ht]
\centering

\begin{tikzpicture}[scale=0.5]
\node[circle, draw, minimum size=0.8cm] (triv) at (0,-3) {$\acts *$};
\node[circle, draw, minimum size=0.8cm] (lin) at (0,0) {$\acts \R$};
\node[circle, draw, minimum size=0.8cm] (bs) at (3,3) {$\acts \mathbb{H}^2$};
\node[circle, draw, minimum size=0.8cm] (bs2) at (-3,3) {$\acts \mathbb{H}^2$};

\draw[thick] (triv) -- (lin);
\draw[thick] (lin) -- (bs);
\draw[thick] (lin) -- (bs2);

\end{tikzpicture}
\caption{$\mathcal{H}(G)$}
\label{fig:amt}
\end{figure}
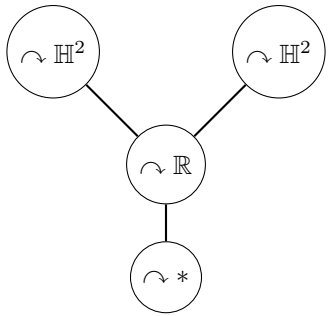

The work in \cite{ABR2} does not currently subsume this result, as it deals with the case when $\phi$ is expanding, which a matrix from $SL(2,\z)$ fails to satisfy. However, it is reasonable, especially in light of part (2) of Theorem \ref{thm:char=min}, if the condition that requires an expanding matrix can be dropped. To be precise, we believe that the following question can be answered affirmatively. 

\begin{ques} Consider the group $G = \z^n \rtimes_\phi \z$, where $\phi \in SL(n, \z)$. Are the quasi-parabolic structures of $G$ are in one-to-one correspondence with the proper invariant subspaces of $\phi$ in $\R^n$? $\HlG$ additionally contains a unique lineal and elliptic structure. 
\end{ques}

\paragraph{Further classification results.}

We expect that the theory developed in \cite{ABR2, ABR} will lay the groundwork for future, more general classifications of hyperbolic actions of metabelian groups. In particular, we may consider the following question. 

\begin{ques} Can Theorems \ref{thm:main} and the techniques developed in \cite{Qp,AR,Largest} be applied to classify the hyperbolic actions of wreath products $A\wr B$ and extensions $A\rtimes B$ when $A$ and $B$ are finitely generated abelian groups?
\end{ques}

As the work contained in \cite{ABR, ABR2} heavily relies on the semi-direct product structure of the group, we may also ask the following. 

\begin{ques} Can the theory of confining subsets be extended to solvable groups that do not necessarily admit a semi-direct product decomposition?
\end{ques}


\bibliographystyle{amsplain}
\bibliography{bibliography}

\vfill

\hrulefill

\footnotesize
S.H.Balasubramanya; 
Department of Mathematics, Lafayette College, USA; Email: hassanba@lafayette.edu

\end{document}